\newcommand{\Rea}{\ensuremath{\mathbb{R}}}
\newtheorem{claimN}{Claim}
\renewcommand{\DeclareMathOperator}[1]{\newcommand{#1}}
\DeclareMathOperator{\Ord}{\mathrm{On}}
\DeclareMathOperator{\SBSS}{\ensuremath{\mathsf{SBSSM}}}
\DeclareMathOperator{\BSS}{\ensuremath{\mathsf{BSSM}}}
\DeclareMathOperator{\OTM}{\ensuremath{\mathsf{OTM}}}
\DeclareMathOperator{\ITTM}{\ensuremath{\mathsf{ITTM}}}
\DeclareMathOperator{\ITRM}{\ensuremath{\mathsf{ITRM}}}
\DeclareMathOperator{\ITBM}{\ensuremath{\mathsf{ITBM}}}
\DeclareMathOperator{\SITBM}{\ensuremath{\mathsf{SITBM}}}
\DeclareMathOperator{\BSITBM}{\ensuremath{\mathsf{BSITBM}}}
\DeclareMathOperator{\WITBM}{\ensuremath{\mathsf{WITBM}}}
\DeclareMathOperator{\SSITBM}{\ensuremath{\mathsf{SSITBM}}}
\DeclareMathOperator{\LimP}{\ensuremath{\mathrm{LimP}}}
\DeclareMathOperator{\SLimP}{\ensuremath{\mathrm{sLimP}}}
\definecolor{lightgray}{gray}{0.75}
\title{Resetting Infinite Time Blum-Shub-Smale-Machines}
\author{Merlin Carl\inst{1} \and Lorenzo Galeotti\inst{2}}
\institute{Europa-Universit\"at Flensburg, 24943 Flensburg, Germany \and Amsterdam University 
College, Postbus 94160, 1090 GD Amsterdam, The Netherlands}
\begin{document}
\maketitle
\begin{abstract}
In this paper, we study strengthenings of Infinite Times Blum-Shub-Smale-Machines ($\ITBM$s) that were proposed by Seyfferth in \cite{Seyfferth} and Welch in \cite{Welch2014} obtained by modifying the behaviour of the machines at limit stages. In particular, we study Strong Infinite Times Blum-Shub-Smale-Machines ($\SITBM$s), a variation of $\ITBM$s where $\lim$ is substituted by $\liminf$ in computing the content of registers at limit steps. We will provide lower bounds to the computational strength of such machines. Then, we will study the computational strength of restrictions of $\SITBM$s whose computations have low complexity. We will provide an upper bound to the computational strength of these machines, in doing so we will strenghten a result in \cite{Welch2014} and we will give a partial answer to a question posed by Welch in \cite{Welch2014}.   
\end{abstract}
\setcounter{footnote}{0}
\section{Introduction}
In \cite{BSS} Blum, Shub and Smale introduced register machines which compute over real numbers called Blum-Shub-Smale-Machines ($\BSS$s). A $\BSS$ is a register machine whose registers contain real numbers and that at each step of the computation can either check the content of the registers and perform a jump based on the result, or apply a rational function to the registers. While being quite powerful, $\BSS$s are still bound to work for a finite amount of time. This limitation is in contrast with the fact that every real number is usually thought as to encode an infinite amount of information. It is therefore natural to ask if a transfinite version of these machines is possible. An answer to this question was first given by Koepke and Seyfferth who in \cite{Seyfferth} and \cite{Koepke12} introduced the notion of Infinite Time Blum, Shub and Smale machine ($\ITBM$). These machines execute classical $\BSS$-programs\footnote{With the limitation that every rational polynomial appearing in the program has rational coefficients.} but are capable of running for a transfinite amount of time. More precisely, the behaviour of $\ITBM$s at successor stages is completely analogous to that of a normal $\BSS$. At limit stage an $\ITBM$ computes the content of each register using the limit operation on $\mathbb{R}$, and updates the program counter using inferior limits. Infinite Time Blum-Shub-Smale-Machines were further studied in \cite{Koepke2017}.  

As mentioned in \cite{Galeotti2019}, the approach taken in extending $\BSS$s to $\ITBM$s is analogous to  that used by Hamkins and Lewis in \cite{Hamkins} and by Koepke and Miller in \cite{Koepke2017} to introduce Infinite Time Turing machines ($\ITTM$s) and Infinite Time Register machines ($\ITRM$s), respectively. A different approach to the generalisation of $\BSS$s analogous to that used by Koepke in \cite{Koepke09} to introduce Ordinal Turing Machines ($\OTM$) was taken by the second author in \cite{Galeotti2019, GaleottiPhD} where he introduced the notion of Surreal Blum, Shub and Smale machine ($\SBSS$). A complete account of all these models of computation can be found in \cite{CarlBook}. 

As shown in \cite{Welch2016,Koepke2017}, because of the way in which they compute the content of registers at limit stages, $\ITBM$s have a limited computational power, which is way below that of other notions of transfinite computability such as $\ITTM$s, $\OTM$s, $\SBSS$s, $\ITRM$s. In this paper we consider strengthenings of the limit rule for $\ITBM$s. We will first consider four natural modifications of the behaviour of $\ITBM$s at limit stages. For each such modification we show that it can either be simulated by $\ITBM$s or by machines that we will call Strong Infinite Time Register machines ($\SITBM$s). In the first part of the paper we will provide a lower bound to the computational power of $\SITBM$s. Then, we will restict our attention to programs whose $\SITBM$-computation is of low complexity. We will show that $\Pi_3$-reflection is an upper bound to the computational strength of these low complexity machines, in doing so we will strengthen a result mentioned without proof by Welch in \cite{Welch2014} and we will give a partial negative answer to the question asked by Welch of whether $\Pi_3$-reflection is an optimal upper bound to the halting times of $\BSS$-programs whose $\SITBM$-computation uses only rational numbers.

Many of the arguments in this paper are inspired by those in \cite{CarlComp} and \cite{CarlZOOII}.
\section{$\ITBM$ Limit Rules}

As we mentioned in the introduction, the value of registers of an $\ITBM$ at limit stages is computed using Cauchy limits, i.e., the content of each register at limit steps is the limit of the sequence obtained by considering the content of the register at previous stages of the computation. The machine is assumed to diverge if for at least one of the registers this limit does not exist. In this section we will consider modified versions of the limit behaviour of $\ITBM$s.

Let $f:\alpha\rightarrow \mathbb{R}$ be an $\alpha$-sequence on $\mathbb{R}$.  We call $\ell\in \mathbb{R}$ a \emph{finite limit} point of $f$ if for all $\beta<\alpha$ and for all positive $\varepsilon\in \mathbb{R}$ there is $\beta<\gamma$ such that $|f(\gamma)-\ell|<\varepsilon$. If for all $x\in \mathbb{R}$ and for all $\beta<\alpha$ there is $\beta<\gamma<\alpha$ such that $x<f(\gamma)$ then we will say that $+\infty$ is an \emph{infinite limit} point of $f$. Similarly for $-\infty$. If for all $x\in \mathbb{R}$ there is $\beta<\alpha$ such that for all $\beta<\gamma<\alpha$ we have $x<f(\gamma)$ then we will say that $+\infty$ is an \emph{infinite strong limit} point of $f$. Similarly, if for all $x\in \mathbb{R}$ there is $\beta<\alpha$ such that for all $\beta<\gamma<\alpha$ we have $x>f(\gamma)$ then we will say that $-\infty$ is an \emph{infinite strong limit} point of $f$.

We will denote by $\LimP(f)\subset \mathbb{R}\cup\{-\infty,+\infty\}$ the set of finite and infinite limit points of $f$, and by $\SLimP(f)\subset \mathbb{R}\cup\{-\infty,+\infty\}$ the set of finite limit points and infinite strong limit points of $f$. Note that, if $f$ has an infinite strong limit point, then $\LimP(f)$ is a singleton containing either $-\infty$ or $+\infty$.

\begin{remark}\label{Rem1}
If $f$ is Cauchy with limit $\ell$, then $\SLimP(f)=\LimP(f)=\{\ell\}$. The converse is not true. Indeed, fix $\rho:\omega\times \omega \rightarrow \omega$ to be any computable bijection. Let $f_n(m)=\ell+\frac{n}{m}$ for $n>0$. Each $f_n$ is a countable sequence which converges to $\ell$. Then, setting $s(\rho( n,m))=f_n(m)$  we see that the sequence $s$ is such that $\SLimP(s)=\{\ell\}$ but is not Cauchy.  
\end{remark}

\begin{remark}\label{Rem2}
Note that if $\alpha$ is a limit ordinal, $f:\alpha\rightarrow \Rea$ is an $\alpha$-sequence, $\gamma<\alpha$ is an ordinal, and $g$ is the sequence $g(\beta)=f(\gamma+\beta)$, then $\LimP(f)=\LimP(g)$, and $\SLimP(f)=\SLimP(g)$. 
\end{remark}

Let $\lambda$ be a limit ordinal and $R_i$ be a register. Assume that for each $\alpha<\lambda$ the register $R_i$ had content $R_i(\alpha)$ in the $\alpha$th step of the computation. We  consider $\ITBM$s whose limit rule is modified as follows:
\begin{enumerate}
\item \emph{Weak $\ITBM$} ($\WITBM$): 
$$R_i(\lambda)=\begin{cases} \ell &\text{if $R^{\lambda}_i$ is Cauchy with limit $\ell$; }\\
0 &\text{if $\min(\SLimP(R^{\lambda}_i))$ is an infinite strong limit.}
\end{cases}$$
\item \emph{Strong $\ITBM$} ($\SITBM$): $$R_i(\lambda)=\begin{cases}
\min(\LimP(R^{\lambda}_i)) &\text{if $\min(\SLimP(R^{\lambda}_i))\in \mathbb{R}$; }\\
0 &\text{if $\min(\SLimP(R^{\lambda}_i))$ is an infinite strong limit.}
\end{cases}$$
\item \emph{Bounded-strong $\ITBM$} ($\BSITBM$): $$R_i(\lambda)= \min(\SLimP(R^{\lambda}_i)) \text{ if $\min(\SLimP(R^{\lambda}_i))\in \mathbb{R}$.}$$
\item  \emph{Super-strong $\ITBM$} ($\SSITBM$): $$R_i(\lambda)=\begin{cases} \min(\LimP(R^{\lambda}_i)) & \text{if $\LimP(R^{\lambda}_i)\in \mathbb{R}$;} \\
0 &\text{if $\min(\LimP(R^{\lambda}_i))$ is infinite.}
\end{cases}
$$
\end{enumerate}
If $\LimP(R_i)=\emptyset$, $\SLimP(R_i)=\emptyset$, or in general if $R_i(\lambda)$ is not defined, we will assume that the machine crashes, i.e., the computation is considered divergent. In each case the rest of the machine is left unchanged. We will also consider what happens when the register contents of an $\SITBM$s are required to belong to a restricted set of real numbers; for $X\subseteq \mathbb{R}$, an $X$-$\SITBM$ works like an $\SITBM$, but the computation is undefined when a state arises in which some register content is not contained in $X$.  

As for $\ITBM$s all the machines we consider run $\BSS$-programs and work on real numbers. Therefore, given $\Gamma\in\{\SITBM,\BSITBM,\SSITBM\}$, the definitions of \emph{$\Gamma$-computation}, \emph{$\Gamma$-computable function}, \emph{$\Gamma$-computable set}, and \emph{$\Gamma$-clockable ordinal} are exactly the same as the correspondent notions for $\ITBM$s, see, e.g., \cite[Definition 1, Definition 2, Definition 3]{Koepke12}. As noted in \cite[Algorithm 4]{Koepke12} $\ITBM$s are capable of computing the binary representation of a real number and perform local changes to this infinite binary sequence. The same algorithms work for $\SITBM$, and therefore for $\BSITBM$, and $\SSITBM$. For this reason, in the rest of the paper we will sometimes treat the content of the each register as an infinite binary sequence. Since not all the binary sequences can be represented in this way, whenever we need to treat a register as an infinite binary sequence, we will do so by representing the sequence $t:\omega\rightarrow 2$ by the real $r$ whose binary representation is such that for every $n\in \mathbb{N}$ the $(2n+1)$st bit is $t(n)$ and all the bits in even position are $0$. In this case we will call $t$ the \emph{binary sequence represented} by $r$.

Given a set $A\subseteq \omega$ we say that it is \emph{$\Gamma$-writable} if there is a $\BSS$-program whose $\Gamma$-computation with no input outputs a real $r$ such that for every $n$, the $n$th bit in the binary sequence represented by $r$ is $1$ if and only if $n\in A$. Similarly, we say that a countable ordinal \emph{$\alpha$ is $\Gamma$-writable} if there is a $\Gamma$-writable set $A$ such that $(\alpha,<)\cong (\mathbb{N},\{(n,m)\;:\; \rho(n,m)\in A\})$. 

Let $P$ be a $n$-register\footnote{Since the number of registers is not of importance in our results, and because of Lemma \ref{Lem:Universal}, in the rest of the paper we will just refer to $n$-registers $\BSS$-programs as $\BSS$-programs.} $\BSS$-program, $r_1,\ldots,r_n\in \mathbb{R}$ be a real numbers, and $(C_\alpha)_{\alpha<\Theta}=(R_1(\alpha)\ldots R_n(\alpha),I(\alpha))_{\alpha< \Theta}$ be the $\SITBM$-computation of $P$ on input $r_1,\ldots,r_n$. For every $\alpha<\Theta$ we will call $C_\alpha$ the \emph{snapshot} of the execution at time $\alpha$. Given an $\BSS$-program $P$ and the $\SITBM$-computation $((R_1(\alpha),\ldots,R_n(\alpha),I(\alpha)))_{\alpha<\Theta}$ of $P$ on input $R_1(0),\ldots,R_n(0)$, for every $i\in \{1,\ldots,n\}$ and $\alpha<\Theta$ we will denote by $R^{\alpha}_i:\alpha \rightarrow \mathbb{R}$ the $\alpha$-sequence such that $R^{\alpha}_i(\beta)=R_i(\beta)$ for all $\beta<\alpha$. In the rest of the paper we will omit the superscript $\alpha$ when it is clear from the context. 

As Lemma \ref{LEM:EqITBM-WITBM}, Lemma \ref{LEM:EqSITBM-BSITBM}, and Lemma \ref{LEM:EqSITBM-SSITBM} show, $\SSITBM$s, $\BSITBM$s, and $\WITBM$s are inessential modifications of $\ITBM$s and $\SITBM$s. For this reason in the rest of the paper we focus on the study of the computational strength of $\SITBM$s. It is worth noticing that a version of $\SITBM$s was briefly considered by Welch in \cite{Welch2014}, see, \S\ref{Sec:Low}.

\begin{lemma}\label{Lem:ITBM-Strong-ITBM}
Every $\ITBM$-computable function is $\SITBM$-computable. 
\end{lemma}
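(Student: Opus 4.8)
The plan is to exhibit, for a given $\BSS$-program $P$ that $\ITBM$-computes $F$, an $\SITBM$-program computing the same function. The key point is that along any \emph{non-crashing} $\ITBM$-computation the $\SITBM$ limit rule coincides with the $\ITBM$ one, which is exactly the content of Remark~\ref{Rem1}; the only genuine work lies in forcing the $\SITBM$ to diverge on those inputs where the $\ITBM$ crashes.

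First I would show by induction on the stage $\alpha$ that, whenever the $\ITBM$-computation of $P$ on $\myvec{r}$ never crashes, the $\SITBM$-computation of $P$ on $\myvec{r}$ produces the same snapshot at stage $\alpha$. At successor stages this is clear, since both machines take an ordinary $\BSS$-step. At a limit stage $\lambda$ the induction hypothesis makes the two register histories $R_i^\lambda$ agree; since the $\ITBM$ does not crash, each $R_i^\lambda$ is Cauchy with some limit $\ell_i$, so Remark~\ref{Rem1} gives $\SLimP(R_i^\lambda)=\LimP(R_i^\lambda)=\{\ell_i\}$. Hence $\min(\SLimP(R_i^\lambda))=\ell_i\in\Rea$ and the $\SITBM$ rule returns $R_i(\lambda)=\min(\LimP(R_i^\lambda))=\ell_i$, the $\ITBM$ value; as the program counter is updated by inferior limits in both models, the counters agree as well. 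This already yields that on every $\myvec{r}\in\dom(F)$ the $\SITBM$ halts with output $F(\myvec{r})$, and that whenever the $\ITBM$ runs forever without crashing so does the $\SITBM$.

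The remaining---and main---difficulty is an input on which the $\ITBM$ crashes at a limit $\lambda$ because some $R_i^\lambda$ is not Cauchy: here the unmodified $P$ might let the $\SITBM$ run on and even halt. To repair this I would augment $P$ with, for each register $R_i$, two auxiliary registers carrying the bounded image $U_i=g(R_i)$ and its mirror $V_i=-U_i$, where $g(x)=x/(1+|x|)$ is the standard homeomorphism of $\Rea$ onto $(-1,1)$, both maintained at every successor step. Being bounded, $U_i$ has $\SLimP(U_i)=\LimP(U_i)$ a nonempty compact subset of $[-1,1]$, so the $\SITBM$ never crashes on $U_i,V_i$ and sets $U_i(\lambda)=\min(\LimP(U_i))$ and $V_i(\lambda)=-\max(\LimP(U_i))$. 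A finite subroutine entered at each limit then tests the numbers $U_i(\lambda)+V_i(\lambda)$ and $|U_i(\lambda)|$: the sum is $0$ exactly when $U_i$ has a single limit point, i.e.\ when the bounded $U_i$ converges to some $c\in[-1,1]$, and then $R_i$ is Cauchy precisely when $c\in(-1,1)$, while $|c|=1$ signals $R_i\to\pm\infty$. The subroutine therefore forces the computation to diverge---say by entering a trivial non-halting loop---whenever some $U_i(\lambda)+V_i(\lambda)\neq0$ or some $|U_i(\lambda)|=1$, and otherwise resumes the simulation; by the analysis above, in the resuming case every $R_i(\lambda)$ already carries the correct value $\ell_i$ and $V_i(\lambda)=-U_i(\lambda)$ is automatically restored.

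Two bookkeeping points remain. To resume at the right instruction I would keep the $\ITBM$ program counter in a dedicated register, updated at each simulated step and frozen during the subroutine; since the $\SITBM$ assigns it the inferior limit of its (bounded, integer) values at limits, this reproduces the $\ITBM$ counter rule. Finally, inserting finitely many detection steps after each limit only pauses the register histories on values they already assume, so by a straightforward cofinality argument (cf.\ Remark~\ref{Rem2}) the limit points computed at all later limits are unchanged; thus the augmented program diverges on exactly the inputs on which the $\ITBM$ diverges and otherwise halts with the $\ITBM$ output, witnessing that $F$ is $\SITBM$-computable. The main obstacle throughout is precisely the detection of non-Cauchy registers at limits, which the mirror construction resolves.
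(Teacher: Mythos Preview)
Your first paragraph \emph{is} the paper's proof: the authors write only ``The claim follows by Remark~\ref{Rem1}'', i.e.\ they run the very same program $P$ as an $\SITBM$ and observe that, whenever a register history is Cauchy, the $\SITBM$ limit rule returns the Cauchy limit, so the snapshots (hence the output on every $\myvec{r}\in\dom(F)$) coincide.

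Everything from your second paragraph on goes well beyond the paper. You build an augmented program that also \emph{forces} divergence on inputs where the $\ITBM$ crashes, via the bounded shadows $U_i=g(R_i)$, $V_i=-g(R_i)$ and the test $U_i(\lambda)+V_i(\lambda)=0\wedge|U_i(\lambda)|<1$. The construction is correct: for bounded sequences $\SLimP=\LimP$ is nonempty, so $U_i(\lambda)=\min\LimP(U_i)$ and $V_i(\lambda)=-\max\LimP(U_i)$ as you claim; the sum vanishes iff $U_i$ converges, and $|U_i(\lambda)|<1$ then rules out $R_i\to\pm\infty$. Your bookkeeping remarks (stored counter reproducing the $\liminf$ rule, detection steps not altering later limit points) are also sound. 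What this buys is a faithful simulation in the strong sense---same halting \emph{set}, not just same graph---whereas the paper, following the definition of $\Gamma$-computable function imported from \cite{Koepke12}, only needs correctness on $\dom(F)$ and imposes nothing on inputs outside the domain. Under that reading the unmodified $P$ already witnesses $\SITBM$-computability of $F$, and your crash-detection machinery, while a genuine strengthening, is not required for the lemma as stated.
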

\begin{proof}
The claim follows by Remark \ref{Rem1}.
\end{proof}
\begin{remark}\label{Rem:ITRM}
Since every $\ITRM$ program is essentially a $\BSS$-program, every $\ITRM$-computable function is $\SITBM$-computable. Moreover, the simulation can be performed in exactly the same number of steps.
\end{remark}
By Remark \ref{Rem:ITRM} the result of Lemma \ref{Lem:ITBM-Strong-ITBM} cannot be reversed.
\begin{lemma}\label{Lem:ITBM-Not-Strong-ITBM}
There is a $\SITBM$-computable function which is not $\ITBM$-computable. 
\end{lemma}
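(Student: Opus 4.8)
The plan is to argue indirectly, combining Remark~\ref{Rem:ITRM} with the known strict separation between $\ITRM$s and $\ITBM$s rather than constructing a witness from scratch. As recalled in the introduction (see \cite{Welch2016,Koepke2017}), the computational strength of $\ITBM$s lies strictly below that of $\ITRM$s; in particular there is a set $A\subseteq\omega$ that is $\ITRM$-computable but not $\ITBM$-computable, and its characteristic function witnesses the strict inclusion at the level of functions. Remark~\ref{Rem:ITRM} then tells us that this $\ITRM$-computable function is already $\SITBM$-computable, via a $\BSS$-program simulating the $\ITRM$ step for step. Since the same function is not $\ITBM$-computable, it is exactly the separating function the lemma asks for.

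The key steps are therefore: (i) fix a function $f$ separating $\ITRM$s from $\ITBM$s using the cited literature; (ii) invoke Remark~\ref{Rem:ITRM} to obtain a $\BSS$-program whose $\SITBM$-computation computes $f$; and (iii) observe that, since $f$ is not $\ITBM$-computable, no $\BSS$-program computes $f$ in the $\ITBM$ sense, which completes the argument. Conceptually, the separation is explained by the mismatch in limit rules: the $\liminf$-style rule underlying $\ITRM$s, faithfully emulated by the $\min(\LimP)$/$\SLimP$ rule of $\SITBM$s as in Remark~\ref{Rem:ITRM}, can accommodate register contents that oscillate cofinally, whereas the Cauchy limit rule of $\ITBM$s forces divergence on any such content.

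The main obstacle is bibliographical rather than mathematical: one must ensure that the quoted separation is genuinely at the level of computable functions (equivalently, decidable subsets of $\omega$), and not merely at the level of clockable or writable ordinals, so that it transfers cleanly through Remark~\ref{Rem:ITRM}. Should only an ordinal-level separation be available, I would instead exhibit an explicit witness directly: a $\BSS$-program that, reading a register as a $0/1$ flag raised and lowered cofinally often below some limit $\lambda$, outputs $\min(\LimP)=0$ under the $\SITBM$ rule, while the corresponding sequence $0,1,0,1,\dots$ fails to be Cauchy and so crashes every $\ITBM$; I would then upgrade this local phenomenon to a function-level separation using the simulation of $\ITRM$s guaranteed by Remark~\ref{Rem:ITRM}.
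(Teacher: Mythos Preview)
Your proposal is correct and follows essentially the same route as the paper: both arguments combine Remark~\ref{Rem:ITRM} with the known strict gap between $\ITRM$s and $\ITBM$s to obtain the separating function. The paper's proof is simply the concrete instance of your scheme, taking as explicit witness the writability of $\omega^{\omega}$ (which is $\ITRM$-writable by \cite{Carl2008} and hence $\SITBM$-writable, but not $\ITBM$-writable).
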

\begin{proof}
It is enough to note that $\omega^\omega$ is $\ITRM$-computable \cite[Lemma 1 \& Theorem 7]{Carl2008} and therefore $\SITBM$-writable but not $\ITBM$-writable.
\end{proof}

\begin{lemma}\label{LEM:EqITBM-WITBM}
A real function $f$ is $\ITBM$-computable iff it is $\WITBM$-computable.
\end{lemma}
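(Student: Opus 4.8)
The plan is to prove the two implications by simulation, with the substance lying in the direction ``$\WITBM$-computable $\Rightarrow$ $\ITBM$-computable''. The forward direction is essentially immediate: if a $\BSS$-program $P$ has a halting $\ITBM$-computation on some input, then along that run every register is Cauchy at each limit (otherwise the $\ITBM$ would diverge), and by Remark~\ref{Rem1} the $\WITBM$-rule agrees with the Cauchy limit on Cauchy sequences (its first clause). Hence the $\WITBM$-computation of $P$ coincides stage by stage with its $\ITBM$-computation on every input in the domain, so $P$ witnesses $\WITBM$-computability of the same function. (If one additionally insists that the simulating machine diverge off the domain, this is arranged exactly as in the reverse direction below, replacing the reset step by a deliberate bounded-oscillation loop that crashes at the next limit.)

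For the reverse direction the obstruction is that a register of a $\WITBM$ may strongly diverge to $\pm\infty$ and then be reset to $0$, whereas an $\ITBM$ would simply crash. The idea is to run the given program $P$ with each register content $x$ stored in the ``tamed'' form $e(x)$, where $e\colon\Rea\to(-1,1)$ is the order-isomorphism $e(x)=x/(1+|x|)$, with inverse $e^{-1}(y)=y/(1-|y|)$. Both $e$ and $e^{-1}$ are piecewise rational with rational coefficients, hence $\BSS$-computable in finitely many steps (branch on a sign, then apply one rational map), and $e$ extends to a homeomorphism $\overline{e}\colon[-\infty,+\infty]\to[-1,1]$ with $\overline{e}(\pm\infty)=\pm1$. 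The key point, to be verified first, is a limit-point correspondence: for any real sequence $(r_\gamma)_{\gamma<\lambda}$, the image sequence $(e(r_\gamma))_{\gamma<\lambda}$ is Cauchy iff $(r_\gamma)$ is either Cauchy or has an infinite strong limit; moreover $\lim e(r_\gamma)=\pm1$ exactly in the strong-divergence cases, while $\lim e(r_\gamma)\in(-1,1)$ corresponds to a finite limit $\ell=e^{-1}(\lim e(r_\gamma))$. This follows by transporting convergence through $\overline{e}$: a sequence with values in $(-1,1)$ is Cauchy iff it has a unique limit point in the compact space $[-1,1]$, and via the homeomorphism this is equivalent to $(r_\gamma)$ having a unique limit point in $[-\infty,+\infty]$, i.e. to being Cauchy or strongly divergent.

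Granting this, I would build an $\ITBM$ program $P'$ that, for each register $R_i$ of $P$, maintains $S_i=e(R_i)\in(-1,1)$ and simulates each operation of $P$ directly in the tamed coordinates: a rational operation $g$ of $P$ is replaced by its conjugate $e\circ g\circ(e^{-1},\dots,e^{-1})$, a piecewise rational map of the $S_i$ (with cases selected by sign comparisons) whose values again lie in $(-1,1)$. Crucially, working throughout in the tamed coordinates keeps \emph{all} register contents bounded at every stage, so that no auxiliary register can ever strongly diverge or oscillate spuriously. At a limit $\lambda$ the correspondence guarantees that each $S_i$ has a Cauchy limit precisely when the corresponding $R_i$ is Cauchy or strongly divergent, so $P'$ survives the limit exactly when $P$ does on the $\WITBM$; a short fix-up routine, executed at the start of every simulated step, checks whether some $S_i$ has reached a boundary value $\pm1$ and, if so, resets it to $e(0)=0$, thereby reproducing the $\WITBM$ reset-to-$0$ rule. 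After encoding the input and decoding the final output, $P'$ computes the same function on the $\ITBM$.

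The main obstacle is twofold. First, one must verify the limit-point correspondence carefully, including that $(e(r_\gamma))$ becomes \emph{non}-Cauchy exactly in the remaining case where $(r_\gamma)$ is neither Cauchy nor strongly divergent, so that $P'$ crashes on the $\ITBM$ precisely when $P$ crashes on the $\WITBM$; this is where the choice of $e$ as a homeomorphism of the two-point compactification is essential, and where Remark~\ref{Rem2} is used to reduce everything to tails. Second, there is the usual infinite-time bookkeeping: slowing $P$ down into uniformly bounded macro-steps, keeping each $S_i$ constant within a macro-step, aligning the limit stages of $P'$ with those of $P$, and arranging the program counter so that its inferior limit lands on the fix-up routine at every limit. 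The decisive design choice that makes this bookkeeping go through is never to materialise an untamed value $e^{-1}(S_i)$ in a register that crosses a limit, which is exactly what conjugating the operations achieves.
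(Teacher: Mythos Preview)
Your proof is correct and follows essentially the same strategy as the paper: tame each register through an order-homeomorphism onto a bounded open interval so that strong divergence becomes convergence to an endpoint, detect the endpoint after each limit stage, and reset to the image of $0$. The paper uses the sigmoid $G(x)=e^{2x}/(e^{2x}+1)$ onto $(0,1)$ rather than your piecewise rational $e(x)=x/(1+|x|)$ onto $(-1,1)$, and is much terser about the limit-point correspondence and the bookkeeping you spell out, but the argument is the same.
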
 
\begin{proof}
Trivially every $\ITBM$-computable function is $\WITBM$-computable. Now, let $f$ be any ITBM computable field isomorphism between $\mathbb{R}$ and $(0,1)$, e.g., $G:x\mapsto \frac{e^{2x}}{e^{2x}+1}$. Let $f$ be a $\WITBM$ computable function and $P$ be a program computing it. One can define a program $P'$ in which every constant $c$ is replaced by $G(c)$ and all the fields operations are replaced with the correspondent operations in $(0,1)$. Moreover, at each step of the computation the program checks if one of the registers is $0$ or $1$, if so the program sets the register to $G(0)$. Finally the program should compute $G^{-1}$ of the output. It is not hard to see that the program computes $f$. 
\end{proof}

\begin{lemma}\label{LEM:EqSITBM-BSITBM}
A function $f:\mathbb{R}\rightarrow \mathbb{R}$ is $\SITBM$-computable iff it is $\BSITBM$-computable.
\end{lemma}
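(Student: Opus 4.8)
The plan is to adapt the conjugation argument of Lemma~\ref{LEM:EqITBM-WITBM}. Fix an order isomorphism $G:\Rea\to(0,1)$ which, together with its inverse, is computable by the machines under consideration (for instance the one used there), and transport every computation into the bounded interval $(0,1)$ by replacing each constant $c$ by $G(c)$ and each field operation by its $G$-conjugate. The reason for working inside $(0,1)$ is the following elementary fact, which I would isolate first: a sequence $s:\alpha\to(0,1)$ is bounded, hence has no infinite weak or strong limit points, so $\LimP(s)=\SLimP(s)$ is exactly the set of cluster points of $s$ in $[0,1]$; since $\alpha$ is a limit ordinal and $[0,1]$ is compact, this set is nonempty. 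Therefore on such a sequence the $\SITBM$-rule and the $\BSITBM$-rule return the \emph{same} value $\min(\LimP(s))=\min(\SLimP(s))\in[0,1]$, and neither diverges. Consequently any $\BSS$-program $Q$ whose registers remain in a fixed bounded interval has identical $\SITBM$- and $\BSITBM$-computations, and so computes one and the same function under both semantics. This reduces the lemma to showing that every $\SITBM$-computable and every $\BSITBM$-computable function is computed by such a bounded program under the corresponding semantics.

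For the implication from $\SITBM$ to $\BSITBM$ I would take an $\SITBM$-program $P$ for $f$ and let $Q$ be its $G$-conjugate, maintaining the invariant that register $i$ of $Q$ holds $G$ applied to register $i$ of $P$; at successor steps this is immediate. At a limit $\lambda$ the common value returned by the limit rule is the least cluster point of the bounded history. If it lies in $(0,1)$ it is precisely $G$ of the value $\min(\LimP)$ assigned by $P$, and nothing is done. The value $1$ can only occur when the original register tends strongly to $+\infty$, where $\SITBM$ resets to $0$; $Q$ mirrors this by resetting to $G(0)$. The value $0$ is the ambiguous case: it arises both when the original register tends strongly to $-\infty$ (where $\SITBM$ resets to $0$) and when it is merely weakly unbounded below while keeping a finite cluster point (where $\min(\LimP)=-\infty$ and $\SITBM$ diverges). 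To tell these apart I would carry a shadow register holding $1-G(\cdot)$, whose limit value equals $1-\limsup$ of the history: a boundary $0$ together with $\limsup=0$ signals strong divergence and is reset to $G(0)$, while $\limsup>0$ signals the divergent case and $Q$ is made to crash. Thus $Q$ reproduces the $\SITBM$-computation of $P$ while staying bounded.

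The reverse implication is where the genuine difficulty lies, and I expect it to be the main obstacle. Given a $\BSITBM$-program $P$, the same conjugation keeps registers bounded, but the semantics to be reproduced is now $\BSITBM$, whose value $\min(\SLimP)$ deliberately ignores weak infinite limit points. After conjugation a weak unboundedness below of the original register surfaces as a bona fide cluster point at $0$ of the bounded history, which the $\min$-based limit rule of the simulating machine picks up; this drags the computed value down to the boundary even though the intended value is the least finite cluster point $G(c_0)$ lying strictly inside $(0,1)$, and a bare $\liminf$ cannot recover it. I would resolve this by maintaining, for a fixed null sequence of thresholds $\varepsilon_k\to0$, clamped copies of the register in which every value below $\varepsilon_k$ is replaced by $1$: the limit of the $k$-th copy returns the least cluster point $\ge\varepsilon_k$, or $1$ if there is none. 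These values stabilise either to $G(c_0)$, the value to be installed, or to $1$, which certifies that the original register diverged strongly (or that $\SLimP$ was empty) and that the $\BSITBM$-computation crashes. Encoding and evaluating this countable family within finitely many registers, via the binary-sequence representation introduced after the limit rules, is the technical crux. With the interior value recovered or a crash declared (and the boundary $1$ treated, as above, as strong divergence to $+\infty$), the resulting $Q$ stays bounded and reproduces the $\BSITBM$-computation of $P$; by the boundedness observation it also computes $f$ as an $\SITBM$, closing the equivalence.
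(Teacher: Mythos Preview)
Your proposal shares the paper's central device---conjugating the computation into $(0,1)$ via an $\ITBM$-computable order isomorphism $G$ so that all register histories become bounded and the $\SITBM$/$\BSITBM$ limit rules coincide---but you diverge from the paper in two respects.

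First, you invert the paper's assessment of which direction is hard. The paper declares ``every $\BSITBM$-computable function is $\SITBM$-computable'' without argument and spends its effort on $\SITBM\Rightarrow\BSITBM$. You, by contrast, single out $\BSITBM\Rightarrow\SITBM$ as the genuine obstacle, because a weak (non-strong) unboundedness below in a $\BSITBM$ register---where $\BSITBM$ happily returns the least \emph{finite} cluster point---becomes, after conjugation, a spurious cluster point at $0$ that the $\min$-based rule of the simulating machine would pick up. This concern is coherent under the definitions as written, and the paper does not address it; your clamped-copy scheme at thresholds $\varepsilon_k\to 0$ is a plausible workaround, but you yourself flag encoding the countable family in finitely many registers as the ``technical crux'' and leave it unresolved. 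So on this direction you are more careful than the paper, but also not complete.

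Second, for the direction the paper does treat ($\SITBM\Rightarrow\BSITBM$), your mechanism differs. The paper attaches to each register $R_i$ two Boolean flags $C_i,D_i$ that track direction changes of successive values; at a limit, a boundary value $0$ or $1$ together with $C_i\neq 0$ is taken as evidence of a genuine strong limit to $\pm\infty$ (reset to $G(0)$), and otherwise the simulation enters an infinite loop to mimic the crash. Your alternative---a shadow register holding $1-G(\cdot)$ so that its limit value records $1-\limsup$---achieves a similar discrimination via the $\limsup$ rather than via monotonicity flags. Both are reasonable; the paper's flag mechanism is simpler but arguably rougher (eventual monotonicity is neither necessary nor sufficient for a strong limit to $\pm\infty$, so the paper's sketch is itself somewhat informal), while your $\limsup$ idea is cleaner in principle.

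In short: same backbone, different bookkeeping, and opposite judgments about where the difficulty lies. If you want to align with the paper, you can drop the clamped-copy apparatus and simply assert the $\BSITBM\Rightarrow\SITBM$ direction as the paper does; if you want to be more rigorous than the paper, you will need to actually carry out the encoding you defer.
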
 
\begin{proof}
The proof is very similar to that of Lemma \ref{LEM:EqITBM-WITBM}. Every $\BSITBM$-computable function is $\SITBM$-computable. Now, let $f$ be any $\ITBM$-computable field isomorphism between $\mathbb{R}$ and $(0,1)$, e.g., $G:x\mapsto \frac{e^{2x}}{e^{2x}+1}$. Let $f$ be an $\SITBM$-computable function and $P$ be a program computing it. One can define a program $P'$ in which every constant $c$ is replaced by $G(c)$ and all the fields operations are replaced with the correspondent operations in $(0,1)$. For each register $R_i$ the machine should have two auxiliary registers $C_i$ and $D_i$. Each time $R_i$ is modified the machine does the following:

If $D_i=0$ and the new value of $R_i$ is smaller than the old value then set $D_i=1$ and $C_i=0$, if $D_i=0$ and the new value of $R_i$ is bigger or equal to the old value then set $C_i=1$, if $D_i=1$ and the new value of $R_i$ is smaller than or equal to the old value then set $C_i=1$, if $D_i=1$ and the new value of $R_i$ is bigger than the old value then set $D_i=0$ and $C_i=0$\footnote{This algorithm checks if the content of $R_i$ is due to a proper limit to infinity.}.

At each step of the computation the machine should check that all the registers of the original program are not $0$ or $1$. If a register $R_i$ is $0$ or $1$ and $C_i\neq 0$ then the machine sets $R_i$ to $G(0)$ and continues the computation otherwise the program enters an infinite loop. 

Finally, the program should compute $G^{-1}$ of the output. It is not hard to see that the program computes $f$. 
\end{proof}

Finally, we note that, once more by shrinking the computation to $(0,1)$ as we did in the proof of Lemma \ref{LEM:EqSITBM-BSITBM}, one can show that $\SSITBM$s can be simulated by $\SITBM$s, and therefore the two models compute exactly the same functions.

\begin{lemma}\label{LEM:EqSITBM-SSITBM}
A function $f:\mathbb{R}\rightarrow \mathbb{R}$ is $\SITBM$-computable iff it is $\SSITBM$-computable.
\end{lemma}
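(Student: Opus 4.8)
The plan is to follow the proof of Lemma~\ref{LEM:EqSITBM-BSITBM} and conjugate every computation into the open interval $(0,1)$ by the $\ITBM$-computable field isomorphism $G\colon x\mapsto\frac{e^{2x}}{e^{2x}+1}$, replacing each constant $c$ by $G(c)$ and each field operation by its $G$-conjugate. The crucial observation I would isolate first is that \emph{on sequences with values in $(0,1)$ the $\SITBM$ and the $\SSITBM$ limit rules coincide}: if $s\colon\lambda\to(0,1)$ then $s$ is bounded, so it has no infinite limit points and no infinite strong limit points, $\SLimP(s)=\LimP(s)$ is a non-empty closed subset of $[0,1]$, and consequently both rules return $\min(\LimP(s))$ at stage $\lambda$. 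This is exactly where $\SSITBM$ is more convenient than $\BSITBM$: the $\SSITBM$ rule reads off $\min(\LimP)$ directly, so none of the $\min(\SLimP)$-versus-$\min(\LimP)$ discrepancies at the boundary that complicate Lemma~\ref{LEM:EqSITBM-BSITBM} can occur.

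From this I would extract a single \emph{transfer principle}. Let $P^{*}$ be any conjugated program, augmented with explicit book-keeping code designed so that every sequence to which a limit rule is applied takes values in $(0,1)$, the forbidden boundary values $\{0,1\}$ being inspected at each limit stage and either overwritten by an element of $(0,1)$ or used to trigger a deliberate divergence. Since every limit rule in such a run is then applied to a $(0,1)$-valued sequence, the observation above gives that the snapshot produced at each limit stage is identical whether $P^{*}$ is executed as an $\SITBM$ or as an $\SSITBM$; as the successor steps and the book-keeping code are model-independent, $P^{*}$ computes one and the same partial function under both semantics. It therefore suffices, for each direction of the equivalence, to realise the given machine by some such conjugated program, and the equivalence then drops out of the transfer principle.

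For $\SSITBM\le\SITBM$ I would take an $\SSITBM$-program for $f$, let $P^{*}$ reset any register whose limit lands in $\{0,1\}$ to $G(0)$, and finally output $G^{-1}$ of the result: a conjugated limit value $0$ (resp.\ $1$) corresponds precisely to $\min(\LimP)=-\infty$ (resp.\ $+\infty$) of the original register, and in both cases the $\SSITBM$ rule prescribes the value $0$, so this reset is faithful and needs no further book-keeping. The converse $\SITBM\le\SSITBM$ is where I expect the real work, because the boundary code must now reproduce the $\SITBM$ rule, which outputs $0$ at a strong limit to $\pm\infty$ but \emph{diverges} when $\min(\LimP)=-\infty$ is only a weak limit point (the case $\min(\SLimP)\in\Rea$ together with $\min(\LimP)=-\infty$). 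To separate these I would reuse verbatim the auxiliary registers $C_i,D_i$ from Lemma~\ref{LEM:EqSITBM-BSITBM}, which track whether the approach to the boundary is a proper (strong) limit; at a boundary value $0$ I reset to $G(0)$ when $C_i$ signals a strong approach and enter an infinite loop otherwise, whereas a boundary value $1$ necessarily comes from a strong limit to $+\infty$ and is always reset to $G(0)$. The main obstacle is precisely the verification that the $C_i,D_i$ book-keeping separates strong from weak approaches to the boundary $0$ exactly, so that the simulating $\SSITBM$ diverges on exactly those limit configurations on which the source $\SITBM$ crashes; the matching of finite limit points and of the strong-$+\infty$ case is then routine.
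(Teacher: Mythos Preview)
Your argument is correct, and for the direction $\SSITBM\le\SITBM$ it is exactly the shrinking-to-$(0,1)$ trick that the paper invokes (with the simple reset of boundary values to $G(0)$). However, you have the two directions reversed as to which one carries ``the real work''. The direction $\SITBM\le\SSITBM$ is in fact immediate, with no conjugation and no $C_i,D_i$ bookkeeping needed: one checks directly from the definitions that whenever the $\SITBM$ limit rule returns a real number, the $\SSITBM$ rule returns the very same number. Indeed, if $\min(\SLimP)\in\Rea$ and the $\SITBM$-output $\min(\LimP)$ is real, then the first $\SSITBM$-clause applies and also outputs $\min(\LimP)$; if $\min(\SLimP)$ is an infinite strong limit, then there are no finite limit points, so $\min(\LimP)\in\{-\infty,+\infty\}$ and the second $\SSITBM$-clause also outputs $0$. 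Consequently any \emph{halting} $\SITBM$-computation of a program $P$ is, snapshot by snapshot, identical to the $\SSITBM$-computation of the same $P$, and the same program already computes $f$ under $\SSITBM$ semantics. The paper accordingly only mentions the direction $\SSITBM\le\SITBM$, which is the one genuinely requiring the shrinking argument; your more elaborate treatment of the other direction is not wrong (it reduces to the same boundary analysis as in Lemma~\ref{LEM:EqSITBM-BSITBM}), but it is superfluous.
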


We end this section by noticing that, coding finitely many real numbers in one register one can easily build a universal $\SITBM$ machine. 

\begin{lemma}\label{Lem:Universal}
There is a $\BSS$-program $U$ which given a real number $r$ coding a $\BSS$-program $P$ and the values $\overline{r}$ of the input registers of $P$, executes $P$ on $\overline{r}$. 
\end{lemma}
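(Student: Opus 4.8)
The plan is to have $U$ carry out a step-by-step simulation of the coded program $P$, keeping the entire configuration of $P$ inside a fixed, program-independent collection of registers. First I would reserve one register for the real $r$ coding $P$ (from which $U$ reads the number $n$ of registers of $P$ and the finite list of its instructions), one register for the simulated program counter $I$ (a natural number coded as a real), registers holding the current contents $R_1(\alpha),\dots,R_n(\alpha)$ of the registers of $P$, and finitely many scratch registers. The literal content of \emph{coding finitely many reals in one register} is then used on the input: fixing the computable bijection $\rho\colon\omega\times\omega\to\omega$ of Remark~\ref{Rem1}, the tuple $\overline r$ is delivered to $U$ in a single register by storing the binary sequence represented by the $i$-th input in the bits at positions $\rho(i,\cdot)$, and $U$ unpacks it by the local bit-manipulation already available for $\SITBM$s.

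The simulation loop proceeds as follows. From the code register and the counter, $U$ decodes the next instruction of $P$; since this is a finite object the decoding is $\SITBM$-computable. If the instruction is a comparison-and-jump, $U$ compares the relevant contents with its native $\BSS$ tests and updates the counter. If the instruction applies a rational function with rational coefficients, $U$ evaluates it in its scratch registers, writes the result into the appropriate content register, and updates the counter. I would arrange the bookkeeping so that each single step of $P$ is simulated by a block of $U$-steps of a fixed order type, with the content registers \emph{held fixed} during the block and overwritten only at its very end, so that the values taken cofinally by each content register below a limit $U$-stage are exactly the genuine snapshots of $P$; by Remark~\ref{Rem2} the finite administrative initial segment of each block may be ignored, and limit stages of the simulation line up with limit stages of $P$.

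The hard part will be the faithful simulation of the limit rule, and here the $\SITBM$ case is genuinely more delicate than the Cauchy limit of plain $\ITBM$s. The $\SITBM$ rule sets $R_i(\lambda)=\min(\LimP(R^{\lambda}_i))$ \emph{componentwise}, i.e.\ each register takes the least accumulation point of its own history; this operation does not commute with storing several reals in a single register, since two registers oscillating out of phase (say with snapshots alternating between two incompatible tuples) have a vector sequence whose least accumulation point need not decode to the tuple of the componentwise least accumulation points. Consequently I would \emph{not} collapse the contents of $P$ into one register across limits, but keep them in separate registers so that the hardware applies $\min(\LimP(\cdot))$ to each history independently; the delicate verification is then that the auxiliary registers used for decoding, counting and scratch work do not themselves contribute spurious accumulation points at limit stages, and that the divergence clauses of the rule — an empty $\LimP(R^{\lambda}_i)$, or an infinite strong limit point of $\SLimP(R^{\lambda}_i)$ — are translated into divergence, respectively the value $0$, of the corresponding simulated register and into nothing else. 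Once this alignment is established, the successor-stage clauses and the final decoding of the output of $P$ are routine, and $U$ computes exactly the $\SITBM$-computation of $P$ on $\overline r$.
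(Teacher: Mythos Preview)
The paper gives no proof of this lemma; the only justification is the preceding sentence, ``coding finitely many real numbers in one register one can easily build a universal $\SITBM$ machine''. Your proposal is therefore already far more detailed than what the paper offers, and you put your finger on exactly the right difficulty: the $\SITBM$ limit rule acts on each register separately, and a naive packing of several reals into one does not commute with $\min(\LimP(\cdot))$ --- your out-of-phase example makes this vivid, and it shows that the one-line hint in the paper is not by itself a proof.

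Where your proposal breaks down is in the resolution of this difficulty. You propose to ``keep them in separate registers so that the hardware applies $\min(\LimP(\cdot))$ to each history independently'', while at the same time requiring $U$ to use a ``fixed, program-independent collection of registers''. These two demands are incompatible: $U$ is a single $\BSS$-program with some fixed number of registers, whereas the simulated program $P$ may use arbitrarily many --- indeed, the paper's own footnote immediately following the lemma invokes Lemma~\ref{Lem:Universal} precisely to justify ignoring the register count. You therefore cannot allot one physical register of $U$ to each $R_i$ of $P$; some packing of the $n$ simulated registers into boundedly many registers of $U$ is unavoidable, and you are back to the non-commutation problem you yourself diagnosed. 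A complete argument must bridge this gap --- for instance by having $U$ maintain, alongside the packed contents, auxiliary running-infimum data whose components are monotone (so that a bit-interleaving coding \emph{does} behave well under the limit rule on them), together with a flag register allowing $U$ to detect limit stages and reconstruct the correct $R_i(\lambda)$ from that data. Neither your sketch nor the paper supplies such a mechanism.
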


\section{Bounding the Computational Power of $\SITBM$s}

In this section we will provide a lower bound to the computational strength of $\SITBM$s.

In particular we shall show that $\SITBM$s are stronger than $\ITRM$s. Given $X\subseteq \omega$ we will denote by $\mathcal{O}^X$ the hyperjump of $X$, see, e.g., \cite{sacks2017}.

In order to show that $\SITBM$s are stronger than $\ITRM$s we will prove that $\SITBM$s can compute the $\omega$ iteration of the hyperjump function and much more. First, we note that as in \cite[Proposition 2.8]{Koepke2017}, if a function $f$ is $\SITBM$-computable, then iterations of $f$ along a $\SITBM$-clockable ordinal are also computable. Let $f:\mathbb{R}\rightarrow \mathbb{R}$ be a function, $\Theta$ be an infinite countable ordinal, and $h:\omega  \rightarrow \Theta$ be a  bijection, we define:
\begin{align*}
f^{0}_h&=0,\\
f^{\alpha+1}_h&=f(f^{\alpha}_h) \text{ for $\alpha+1\leq \Theta$},\\
f^{\lambda}_h&=\bigoplus^{h}_{\alpha\in \lambda}f^{\alpha}_h \text{ for $\lambda\leq\Theta$ limit},
\end{align*}
where $\bigoplus^{h}_{\alpha\in \lambda}f^{\alpha}_h$ is the real $r$ such that if $n=\rho(i,h(\alpha))$ then the $n$th bit of the binary sequence represented by $r$ is the same as the $i$th bit of the binary representation of $f^{\alpha}_h$ for all $i\in\mathbb{N}$ and $\alpha<\lambda$, and $0$ for $\lambda<\alpha$. 

Similarly to \cite[Proposition 2.7]{Koepke12}, one can easily see that $\SITBM$s can compute iterations of $\SITBM$-computable function over an $\SITBM$-writable ordinal. 

\begin{lemma}[Iteration Lemma]\label{Lem:Iter}
Let $f:\mathbb{R}\rightarrow \mathbb{R}$ be a $\SITBM$-computable function, and $\Theta$ be $\SITBM$-writable. Then there is a bijection $h:\omega\rightarrow\Theta$ such that $f^{\Theta}_h$ is $\SITBM$-computable.
\end{lemma}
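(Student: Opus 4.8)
The plan is to simulate the iteration process directly on an $\SITBM$ by running through the $\SITBM$-computation that produces a real coding $\Theta$, and using that computation as a clock along which to carry out the recursion defining $f^{\Theta}_h$. Since $\Theta$ is $\SITBM$-writable, first I would fix a $\BSS$-program $Q$ whose $\SITBM$-computation outputs a real $w$ coding a well-order $A \subseteq \omega$ of order type $\Theta$ via $\rho$; the bijection $h$ will be extracted from this coding, namely $h(\alpha)$ is the $\alpha$-th element of $\omega$ in the well-order $A$. The key point, following the approach of \cite[Proposition 2.7]{Koepke12} and \cite[Proposition 2.8]{Koepke2017}, is that once we have a concrete bijection $h$ realising $\Theta$ as a relation on $\omega$, the recursion defining $f^{\alpha}_h$ can be carried out step by step, storing the partial results $f^{\alpha}_h$ in the slots of a single register dictated by the pairing function $\rho$ and the coordinate $h(\alpha)$, exactly as $\bigoplus^{h}_{\alpha \in \lambda} f^{\alpha}_h$ prescribes.

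First I would handle the successor and limit structure of the recursion. At a successor step $\alpha + 1$, having already computed and stored $f^{\alpha}_h$ in the appropriate bit-block, the machine reads $f^{\alpha}_h$ out, applies the $\SITBM$-program for $f$ (invoking the universal machine of Lemma \ref{Lem:Universal}), and writes the resulting $f^{\alpha+1}_h$ into its designated block. At a limit stage $\lambda \le \Theta$, the accumulated register already contains, by construction, the real $\bigoplus^{h}_{\alpha \in \lambda} f^{\alpha}_h$, because each $f^{\alpha}_h$ for $\alpha < \lambda$ was written once into a fixed block and never disturbed afterwards; so no further work is needed to form the limit value beyond having the $\SITBM$ limit rule preserve the already-stabilised bits. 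The main subtlety here is \emph{bookkeeping at limits}: the machine must recognise when it has exhausted all predecessors $\alpha < \lambda$ in the order $A$ and must then begin processing the next index, which requires searching $A$ for the $<_A$-least element not yet handled. Because the entire order $A$ is available in a register and $\SITBM$s can perform the relevant bit manipulations (as noted in the discussion of \cite[Algorithm 4]{Koepke12}), this search is $\SITBM$-computable.

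The central obstacle I would expect is ensuring that the auxiliary bits used for the step-by-step search and for sequencing do not corrupt the accumulated value of $f^{\lambda}_h$ when the $\SITBM$ limit rule fires. Concretely, the $\min(\SLimP(\cdot))$ rule computes a $\liminf$-style value componentwise only in the derived sense of bitwise stabilisation, and I must guarantee that every working bit of every auxiliary register genuinely stabilises by each limit stage $\lambda$ so that the limit snapshot is exactly the intended configuration rather than some spurious $\liminf$. The standard remedy, and the one I would adopt, is to reset all scratch registers to a fixed value (so that their limits are forced) before each time the main loop cedes control to a possible limit, and to store the persistent data $f^{\alpha}_h$ in blocks that are written monotonically and never rewritten; this flush-and-freeze discipline is precisely what makes the $\liminf$ rule reproduce the clean accumulation. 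Once this is arranged, verifying that the overall computation halts at the stage coding $\Theta$ and outputs $f^{\Theta}_h$ is routine, and the bijection $h$ read off from $A$ is the witness required by the statement.
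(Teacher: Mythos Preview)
Your proposal correctly identifies the overall structure of the iteration and the bookkeeping needed to step along the well-order coding $\Theta$, and in this respect it parallels the paper's sketch. However, there is a genuine gap in how you handle the \emph{internal} registers of the subroutine computing $f$.

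You write that the remedy is to ``reset all scratch registers to a fixed value (so that their limits are forced)''. But resetting a register to a fixed value $v$ between calls does \emph{not} force its $\SITBM$-limit to be $v$: during the $n$-th invocation of the $f$-subroutine the register may dip below $v$, and there is no uniform lower bound on how far. Concretely (this is essentially the paper's own cautionary footnote), a working register could assume values $0,1,-1,2,-2,3,-3,\ldots$ across successive iterations; then $-\infty$ is an infinite limit point but not a \emph{strong} one, so $\min(\SLimP(\cdot))$ is undefined and the $\SITBM$ crashes at the first limit of iterations. Your flush-and-freeze discipline handles the bookkeeping registers you introduce yourself, but it says nothing about the registers used internally by the program for $f$, over whose behaviour you have no control. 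The remark that limits ``can only occur when the main loop cedes control'' is also not right: a limit of iteration stages is a genuine limit ordinal at which the machine applies its limit rule to the entire history, including the interiors of all previous $f$-calls.

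The paper sidesteps this problem by a different device: it invokes Lemma~\ref{LEM:EqSITBM-SSITBM} and carries out the whole iteration on an $\SSITBM$ instead. Since an $\SSITBM$ never crashes from a diverging register (any infinite $\min(\LimP(\cdot))$ simply becomes $0$), transfinite iteration of the $f$-subroutine is automatically safe there, and the equivalence lemma transfers the result back to $\SITBM$-computability. If you wish to salvage your direct approach without this reduction, you would need to force the $f$-subroutine itself to stay inside a fixed bounded interval, e.g.\ by conjugating with an $\ITBM$-computable field isomorphism onto $(0,1)$ as in the proofs of Lemmas~\ref{LEM:EqITBM-WITBM}--\ref{LEM:EqSITBM-SSITBM}; merely resetting between calls is not sufficient.
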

\begin{proof}[Sketch]
We will only sketch the proof and we leave the details to the reader. 

Assume that $\Theta$ is $\SITBM$-writable. Let $h$ be such that $h:(\mathbb{N},\{(n,m)\;:\; \text{the }\rho(n,m) \text{th bit of the binary sequence represented by } r \text{ is } 1\}) \cong (\Theta,<)$ where $r$ is the $\SITBM$-writable real coding $\Theta$. There is a machine that computes $f^{\Theta}_h$.

Note that, since $\Theta$ is $\SITBM$-writable, essentially by using the classical $\ITTM$ algorithm, given a set of natural numbers we can always compute their least upper bound (if it exists) according to the order induced by $h$. Therefore, we can generate the sequence $h^{-1}(0)h^{-1}(2)\ldots h^{-1}(\omega)\ldots$. 

As noted in \cite[Proposition 2.8]{Koepke2017} the main problem in proving this lemma is to show that we can iterate the same program infinitely many times ensuring that no register in the program will diverge because of the iteration\footnote{Note that, while the example mentioned in \cite[p.42]{Koepke2017} is not problematic for $\SITBM$s, our machines could still diverge if for example in the infinite iteration one of the registers of the program assumes values $0,1,-1,2,-2,3,-3,\ldots$.}. To solve this problem we note that by Lemma \ref{LEM:EqSITBM-SSITBM} it is enough to show that the function is $\SSITBM$-computable. Note that $\SSITBM$-computations are obviously closed under transfinite iterations since $\SSITBM$s never brake because of a diverging register.

 
The $\SSITBM$ algorithm to compute the function is the following: the machine first writes $\Theta$ in one of the registers say $R_1$. Then, our machine starts computing $f(0)$ and saves the result in the register $R_2$ by copying the $i$th bit of $f(0)$ in position $\rho(i,h^{-1}(0))$ of the binary sequence represented by $R_2$. In general, the machine should compute $f(f^{\alpha}_h)$ and save the result in $R_2$ according to $h^{-1}$ as we did for $f(0)$. Checking the correctness of the  sketched algorithm is left to the reader. 
\end{proof}

Now, we can prove that $\SITBM$s are strictly stronger than $\ITRM$s. First note that since $\ITRM$s can compute the hyperjump of a set, by Remark \ref{Rem:ITRM}, so can $\SITBM$s.

\begin{lemma}
Let $X\subseteq \omega$. The hyperjump $\mathcal{O}^X$ in the oracle $X$ is $\SITBM$-computable. 
\end{lemma}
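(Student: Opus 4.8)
The plan is to reduce the statement to the analogous fact for $\ITRM$s. By Remark \ref{Rem:ITRM} every $\ITRM$-computable function is $\SITBM$-computable (indeed in the same number of steps), so it suffices to exhibit an $\ITRM$-program that, with $X$ coded into the oracle, writes $\mathcal{O}^X$. I would therefore start from the standard presentation of the hyperjump: up to Turing equivalence, $\mathcal{O}^X$ is the set of indices $e$ such that the $e$-th $X$-recursive relation $\prec_e$ on $\omega$ is well-founded. Writing $\mathcal{O}^X$ thus amounts to deciding, uniformly in $e$, the well-foundedness of $\prec_e$, and then placing a $1$ in bit $e$ exactly when $\prec_e$ is well-founded.

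The core ingredient is the $\ITRM$ well-foundedness test. For a fixed $e$ the machine uses the $e$-th $X$-recursive functional to compute bits of $\prec_e$ on demand and performs a depth-first search for an infinite $\prec_e$-descending sequence, always trying to extend the current partial sequence by the least admissible element and backtracking when no extension exists. Running this search for ordinal time, the decisive point is that the $\liminf$-style limit behaviour of $\ITRM$ registers lets a suitably maintained flag register distinguish the two cases: along a genuine infinite descending branch the relevant register stabilises in a way that signals ill-foundedness, whereas in the well-founded case the search backtracks cofinally often and the flag's inferior limit records that every branch terminates. Iterating over all $e$ then writes $\mathcal{O}^X$ bit by bit.

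I expect the genuine difficulty to lie entirely in this well-foundedness test, namely in arranging the search and the flag so that the inferior limit at limit stages correctly separates well-founded from ill-founded relations without a register diverging spuriously. This is, however, exactly the known fact that $\ITRM$s compute the (relativised) hyperjump; I would invoke it as established in the literature rather than reprove it. Granting it, the lemma is immediate: the $\ITRM$-program writing $\mathcal{O}^X$ is, by Remark \ref{Rem:ITRM}, already an $\SITBM$-program with the same behaviour, so $\mathcal{O}^X$ is $\SITBM$-writable and hence $\SITBM$-computable.
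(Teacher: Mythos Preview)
Your proposal is correct and follows exactly the paper's approach: invoke the known fact that $\ITRM$s compute the relativised hyperjump and transfer it to $\SITBM$s via Remark~\ref{Rem:ITRM}. The additional sketch of the $\ITRM$ well-foundedness test is more detail than the paper gives, but the underlying argument is the same.
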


By Lemma \ref{Lem:Iter} we have the following result:
 
\begin{lemma}
Let $\alpha$ be $\SITBM$-clockable. Then the $\alpha$th iteration of the hyperjump is $\SITBM$-computable. 
\end{lemma}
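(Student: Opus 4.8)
The plan is to reduce the statement to two facts already at hand: that the hyperjump operator $f\colon X\mapsto \mathcal{O}^X$ is \SITBM-computable, and the Iteration Lemma (Lemma~\ref{Lem:Iter}). The $\alpha$th iterate of the hyperjump is $f^{\alpha}_h$ for a suitable bijection $h\colon\omega\to\alpha$, and Lemma~\ref{Lem:Iter}, applied to this $f$ with $\Theta=\alpha$, yields such an $h$ together with an \SITBM\ computing $f^{\alpha}_h$ --- \emph{provided that $\alpha$ is \SITBM-writable}. As the hypothesis only supplies that $\alpha$ is \SITBM-\emph{clockable}, the entire argument reduces to the bridging claim that every \SITBM-clockable ordinal is \SITBM-writable. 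Once this is in place I would simply instantiate Lemma~\ref{Lem:Iter}.

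To prove the bridging claim, I would fix a \BSS-program $P$ whose \SITBM-computation on empty input halts exactly at stage $\alpha$, and run a simulation of $P$ alongside an auxiliary block of registers maintaining a real that codes (as a binary sequence, in the sense fixed before Lemma~\ref{Lem:Universal}) a well-order on $\omega$ whose order type tracks the current computation stage. At successor steps the counter is incremented by appending a new maximal element; at limit steps one reads off, via the $\liminf$ rule, a code for the supremum of the previously coded ordinals. When the simulation halts at stage $\alpha$ the counter holds a code of order type $\alpha$, which is then output. This is the familiar \ITTM-style \vir{ordinal counter}, and the excerpt already exploits that \SITBM s can execute such \ITTM\ algorithms, e.g.\ to compute least upper bounds in the proof of Lemma~\ref{Lem:Iter}.

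The main obstacle is the limit behaviour of this counter, and it is of two kinds. First, one cannot name stages by the naive rule \vir{take the least natural not yet used}: that scheme exhausts $\omega$ already at stage $\omega$ and so codes no ordinal above $\omega$. The counter must instead re-index its domain so that at each limit $\lambda$ the coded relation, read bit by bit, converges under $\liminf$ to a genuine well-order of type $\lambda$; checking that every bit of the order-code eventually stabilises --- so that no auxiliary register oscillates and each $\liminf$ delivers the intended value --- is the technical heart of the proof. Second, to exclude spurious divergence of the auxiliary registers I would argue, exactly as in the proof of Lemma~\ref{Lem:Iter}, within the equivalent \SSITBM\ model (Lemma~\ref{LEM:EqSITBM-SSITBM}), since \SSITBM-computations never break because of a diverging register; the resulting writability then transfers back to \SITBM s.
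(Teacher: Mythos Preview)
Your core strategy---combine the $\SITBM$-computability of the hyperjump with the Iteration Lemma---is exactly what the paper does; indeed the paper's entire proof is the single line ``By Lemma~\ref{Lem:Iter} we have the following result''.

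You go further than the paper by noticing that the Iteration Lemma is stated for $\SITBM$-\emph{writable} $\Theta$ while the present lemma assumes only that $\alpha$ is $\SITBM$-\emph{clockable}, and by sketching a clockable~$\Rightarrow$~writable argument via an ordinal counter. The paper does not address this mismatch at all, so you have identified a genuine lacuna in the paper rather than introduced one of your own. Your proposed fix is the natural one, and the appeal to the $\SSITBM$ model to avoid spurious divergence is in line with the paper's own tricks; however, the crucial point---that the single real encoding the well-order has, at each limit stage, $\liminf$ equal to a code for the supremum ordinal---does not follow automatically from bitwise stabilisation, since the $\SITBM$ limit rule acts on the real value, not bit by bit. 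You flag this as ``the technical heart'' but do not carry it out, so the bridge remains a sketch. In short: your reduction to Lemma~\ref{Lem:Iter} is precisely the paper's argument, and your additional observation about clockable versus writable is a valid criticism of the paper's presentation rather than a defect in your proof.
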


But since by \cite[Proposition 12]{Koepke2009OC} $\ITRM$s cannot compute the $\omega$-iteration of the hyperjump we have that $\SITBM$s are strictly stronger than $\ITRM$s.

\begin{corollary}
There is a subset of natural numbers $A\subseteq \mathbb{N}$ which is $\SITBM$-computable but not $\ITRM$-computable, and thus $A\notin \mathbf{L}_{\omega^{\mathrm{CK}}}$.
\end{corollary}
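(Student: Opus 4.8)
The plan is to let $A$ be (a set of natural numbers coding) the $\omega$-th iteration of the hyperjump and to verify each clause of the statement for this particular $A$ from the machinery already in place. Concretely, take $f$ to be the hyperjump operation $X\mapsto\mathcal{O}^{X}$, which is $\SITBM$-computable by the lemma asserting that $\mathcal{O}^{X}$ is $\SITBM$-computable for every $X\subseteq\omega$. Since $\omega$ is $\SITBM$-clockable (a counting loop together with a register converging to a value it attains at no finite stage clocks $\omega$; this is routine), the Iteration Lemma (Lemma~\ref{Lem:Iter}) applied with $\Theta=\omega$ yields a bijection $h:\omega\to\omega$ such that $f^{\omega}_{h}$ is $\SITBM$-computable. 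Letting $A$ be the set whose characteristic function is the binary sequence represented by $f^{\omega}_{h}$, we obtain that $A$ is $\SITBM$-computable (indeed $\SITBM$-writable); this is precisely the content of the two lemmas immediately preceding the corollary, read at $\alpha=\omega$.

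Next I would establish that $A$ is not $\ITRM$-computable by invoking \cite[Proposition~12]{Koepke2009OC}, according to which no $\ITRM$ computes the $\omega$-th iteration of the hyperjump. Combined with the previous paragraph this gives the desired separation, and hence that $\SITBM$s are strictly stronger than $\ITRM$s.

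For the final clause I would argue by contraposition, honouring the word \vir{thus} in the statement. Every real in $\mathbf{L}_{\omega^{\mathrm{CK}}}$ is hyperarithmetic, and every hyperarithmetic ($\Delta^1_1$) real is $\ITRM$-computable, because $\ITRM$s compute $\mathcal{O}^{\emptyset}$ and every $\Delta^1_1$ real is recoverable from $\mathcal{O}^{\emptyset}$ by an $\ITRM$-computable reduction. Thus $\mathbf{L}_{\omega^{\mathrm{CK}}}\cap\Pow(\omega)$ consists of $\ITRM$-computable sets, so the failure of $\ITRM$-computability for $A$ forces $A\notin\mathbf{L}_{\omega^{\mathrm{CK}}}$. (Alternatively, and more directly, $A$ computes $\mathcal{O}^{\emptyset}$, which is $\Pi^1_1$-complete and hence non-hyperarithmetic, so $A\notin\mathbf{L}_{\omega^{\mathrm{CK}}}$.)

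The step I expect to require the most care is the one in the second paragraph: the object $f^{\omega}_{h}$ delivered by the Iteration Lemma has a bit-layout depending on the auxiliary bijection $h$ and the pairing $\rho$, so one must check that it lies in the same $\ITRM$-degree as the $\omega$-th hyperjump iteration treated in \cite{Koepke2009OC} before the cited non-computability transfers. Since translating between these codings is itself arithmetical, this bookkeeping is harmless; the remaining ingredients (clockability of $\omega$, and the inclusion of the hyperarithmetic reals among the $\ITRM$-computable reals) are routine or already recorded.
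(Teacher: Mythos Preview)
Your proposal is correct and matches the paper's approach: the paper derives the corollary from the sentence immediately preceding it, which takes $A$ to be the $\omega$-iteration of the hyperjump and invokes \cite[Proposition~12]{Koepke2009OC} for its non-$\ITRM$-computability. Your additional justification of the clause $A\notin\mathbf{L}_{\omega^{\mathrm{CK}}}$ via the inclusion of hyperarithmetic reals among $\ITRM$-computable reals, and your remark on the coding-dependence of $f^{\omega}_{h}$, supply details the paper leaves implicit.
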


We end this section by proving a looping criterion for the divergence of $\SITBM$s which is analogous to those for $\ITTM$s \cite[Theorem 1.1]{Hamkins}, $\ITRM$s \cite[Theorem 5]{Carl2008}, and %
$\ITBM$s \cite[Theorem 1]{Koepke12}.

\begin{lemma}[Strong Loop Lemma]\label{Lem:strongLoop}
Let $P$ be a $\BSS$-program, $(C_\alpha)_{\alpha< \Theta}=((R_1(\alpha),\ldots,R_n(\alpha), I(\alpha)))_{\alpha<\Theta}$ be a $\SITBM$-computation of $P$, and let $\gamma<\beta<\Theta$ be such that:
\begin{enumerate}
\item \label{SL1} $(R_1(\gamma),\ldots,R_n(\gamma),I(\gamma))=(R_1(\beta),\ldots,R_n(\beta), I(\beta))$;
\item \label{SL2}  for all $\gamma\leq \alpha\leq \beta$ we have $I(\beta)\leq I(\alpha)$ and for all $i\in \{1\ldots n\}$ we have $R_i(\beta)\leq R_i(\alpha)$.
\end{enumerate}
Then $P$ diverges. 
\end{lemma}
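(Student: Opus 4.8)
The plan is to show that the loop forces the computation to repeat the block $[\gamma,\beta)$ cofinally often, never halting. Write $\delta$ for the ordinal with $\gamma+\delta=\beta$. The decisive consequence of hypothesis~(\ref{SL2}) is that, over the interval $[\gamma,\beta]$, each register sequence and the program counter attain their \emph{minimum} exactly at the endpoints: since $R_i(\beta)=R_i(\gamma)$ and $I(\beta)=I(\gamma)$, we have $R_i(\alpha)\ge R_i(\gamma)$ and $I(\alpha)\ge I(\gamma)$ for every $\alpha\in[\gamma,\beta]$, with equality at $\alpha=\gamma$ and $\alpha=\beta$. This monotonicity is what will let the limit rules reproduce the recurring snapshot.

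I would prove by transfinite induction on $\eta>0$ the following statement: the computation reaches stage $\gamma+\delta\cdot\eta$ without halting; the block pattern repeats, in the sense that the snapshot at $\gamma+\delta\cdot\zeta+\mu$ equals the snapshot at $\gamma+\mu$ for all $\zeta<\eta$ and $\mu\le\delta$; the snapshot at $\gamma+\delta\cdot\eta$ equals the snapshot at $\gamma$; and $R_i(\alpha)\ge R_i(\gamma)$, $I(\alpha)\ge I(\gamma)$ hold for all $\alpha\in[\gamma,\gamma+\delta\cdot\eta]$. The base case $\eta=1$ is exactly hypotheses~(\ref{SL1}) and~(\ref{SL2}). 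For a successor $\eta'+1$, determinism at successor steps propagates the matching of snapshots through the new block $[\gamma+\delta\cdot\eta',\,\gamma+\delta\cdot(\eta'+1)]$; at a limit stage $\gamma+\delta\cdot\eta'+\mu$ inside that block, Remark~\ref{Rem2} lets me discard the entire prefix coming from earlier blocks and compute $\LimP$ and $\SLimP$ from a tail lying within the current block, which by the inductive matching of earlier stages coincides with the corresponding tail in the original block $[\gamma,\beta)$. Hence the $\SITBM$ rule returns the same value as in the original block, and the monotonicity bounds are inherited from~(\ref{SL2}).

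The heart of the argument is the limit step, where $\eta$ is a limit ordinal and $\lambda^\ast=\gamma+\delta\cdot\eta=\sup_{\zeta<\eta}(\gamma+\delta\cdot\zeta)$. Here I would compute the snapshot directly from the $\SITBM$ limit rule. By the inductive hypothesis the value $R_i(\gamma)$ is attained at every block boundary $\gamma+\delta\cdot\zeta$, which is cofinal in $\lambda^\ast$, while all intermediate values are $\ge R_i(\gamma)$; consequently the sequence is bounded below and returns to $R_i(\gamma)$ cofinally, so it has no infinite strong limit point, giving $\min(\SLimP(R_i^{\lambda^\ast}))=R_i(\gamma)\in\mathbb{R}$. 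The first clause of the $\SITBM$ rule then applies and yields $R_i(\lambda^\ast)=\min(\LimP(R_i^{\lambda^\ast}))=R_i(\gamma)$, since $-\infty\notin\LimP(R_i^{\lambda^\ast})$ and every finite limit point is $\ge R_i(\gamma)$. The analogous, and easier, computation for the program counter gives $I(\lambda^\ast)=\liminf_{\xi\to\lambda^\ast}I(\xi)=I(\gamma)$. Thus the snapshot at $\lambda^\ast$ equals the one at $\gamma$, the machine does not crash, and the monotonicity bounds persist, closing the induction.

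Finally, since the induction reaches $\gamma+\delta\cdot\eta$ for every ordinal $\eta$, the computation is defined and never enters a halting state at any stage $\ge\gamma$: the snapshot at $\gamma$ is non-halting because the given computation continued past $\gamma$ (as $\gamma<\beta<\Theta$), and every later stage either carries this same snapshot or lies inside a faithful copy of the halt-free block $[\gamma,\beta)$. Hence $P$ does not halt, i.e.\ it diverges. I expect the main obstacle to be the bookkeeping at limit stages inside the repeated blocks: one must invoke Remark~\ref{Rem2} carefully to justify that only the within-block tail governs each limit value, and then verify that the interaction of the $\min(\SLimP)$ side-condition with the $\min(\LimP)$ output indeed reproduces $R_i(\gamma)$ at the cross-block limits, rather than some spurious lower accumulation value.
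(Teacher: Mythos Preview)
Your proof is correct and follows essentially the same route as the paper: you carry out the same transfinite induction showing that the block $[\gamma,\beta)$ repeats at every $\gamma+\delta\cdot\eta$, using determinism at successors, Remark~\ref{Rem2} at limits inside a block, and the minimality hypothesis~(\ref{SL2}) at cross-block limits. The only organisational difference is that the paper splits the argument into two claims (one fixing a block start and propagating through $\mu<\delta$, the other inducting on the block index $\nu$), whereas you fold both into a single strengthened induction hypothesis; your treatment of the $\SITBM$ limit rule at the cross-block limit is also spelled out in more detail than the paper's.
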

\begin{proof}
Without loss of generality we will assume that $n=1$, a similar proof works for an arbitrary number of registers. Note that since $(R_1(\gamma),I(\gamma))=(R_1(\beta),I(\beta))$ the machine is in a loop. Let us call $L=((R_1(\alpha),I(\alpha)))_{\gamma\leq \alpha\leq \beta}$ the \emph{looping block of the computation}. Let $\delta$ be the smallest ordinal such that $\gamma+\delta=\beta$, we will call $\delta$ the \emph{length} of $L$. 

\begin{claimN}\label{Calim1}
If $\alpha=\gamma+\delta\times \nu$ for some $\nu>0$ is such that $(R_1(\gamma), I(\gamma))=(R_1(\alpha),I(\alpha))$ then for all $\mu<\delta$ we have $(R_1(\gamma+\mu),I(\gamma+\mu))=(R_1(\alpha+\mu),I(\alpha+\mu))$, i.e., if $\alpha=\gamma+\delta\times \nu$ is such that $(R_1(\gamma),I(\gamma))=(R_1(\alpha),I(\alpha))$ then the computation from $\alpha$ to $\alpha+\delta$ is the loop $L$. 
\end{claimN}
\begin{proof}
We prove the claim by induction on $\mu$. If $\mu=0$ the claim follows trivially from the hypothesis. If $\mu=\eta+1$ then by inductive hypothesis $(R_1(\gamma+\eta),I(\gamma+\eta))=(R_1(\alpha+\eta),I(\alpha+\eta))$. But then $(R_1(\gamma+\eta+1),I(\gamma+\eta+1))=(R_1(\alpha+\eta+1),I(\alpha+\eta+1))$ follows from the fact that our machines are deterministic. Finally for $\mu$ limit the claim follows from the inductive hypothesis and from Remark \ref{Rem2}. 
\end{proof}
\begin{claimN}\label{Claim2}
For all $\nu>0$ we have $(R_1(\gamma),I(\gamma))=(R_1(\gamma+\delta\times \nu),I(\gamma+\delta\times \nu))$.
\end{claimN}
\begin{proof}
We prove the claim by induction on $\nu$. If $\nu=1$ the claim follows from the assumptions. For $\nu=\eta+1$ then the claim follows from Claim \ref{Calim1}. Assume that $\nu$ is a limit ordinal. By inductive hypothesis and by Claim \ref{Calim1} the computation from step $\gamma$ to step $\gamma+\delta\times \nu$ consists of $\nu$-many repetitions of the loop $L$. In particular this means that the snapshot $(R_1(\gamma),I(\gamma))$ appears cofinally often in the computation up to $\nu$. Therefore $R_1(\gamma)\in \SLimP(R^{\gamma+\delta\times \nu}_1)$. Finally, by \ref{SL2} we have that $I(\gamma)=\liminf_{\alpha<\gamma+\delta\times \nu}I(\gamma)=I(\gamma+\delta\times \nu)$ and $R_1(\gamma)=\min(\SLimP(R^{\gamma+\delta\times \nu}_1))=R_1(\gamma+\delta\times \nu)$ as desired. 
\end{proof}
Finally, note that Claim \ref{Claim2} implies that the computation of $P$ diverges as desired.
\end{proof}

We call a computation $L=((R_1(\alpha),\ldots,R_n(\alpha),I(\alpha)))_{\alpha<\Theta}$ as the one in Lemma \ref{Lem:strongLoop} a \emph{strong loop}.


\section{Low Complexity Machines}\label{Sec:Low}

In the rest of the paper we investigate the ordinals which are clockable by $\SITBM$s whose computations are of low complexity. In particular, we strengthen Lemma \ref{Lem:Welch} which was mentioned without proof in \cite{Welch2014}\footnote{In \cite[p.31]{Welch2014} Welch mentions that the first $\Pi_3$-reflecting ordinal is an upper bound to the computational strength of $\SITBM$s. In a private communication he clarified to the authors that the machines he was referring to are actually machines which only work on rational numbers. The question of whether the first $\Pi_3$-reflecting ordinal is an upper bound to the computational strength of $\SITBM$s is still open.}.


\begin{theorem}[Welch]\label{Lem:Welch}
Let $\beta$ be the first $\Pi_3$-reflecting ordinal. Then, for every rational $\BSS$-program $P$ we have that the $\SITBM$-computation of $P$ with input $0$ either diverges or halts before $\beta$.
\end{theorem}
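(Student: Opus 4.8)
The plan is to prove the contrapositive dichotomy: assuming the $\SITBM$-computation of $P$ on input $0$ does not halt before $\beta$, I will show that it diverges. The engine is the Strong Loop Lemma (Lemma~\ref{Lem:strongLoop}): it suffices to exhibit ordinals $\gamma<\beta'\le\beta$ actually reached by the computation with $C_\gamma=C_{\beta'}$ such that, on the whole interval $[\gamma,\beta']$, the snapshot $C_{\beta'}$ is coordinatewise minimal, i.e. $I(\beta')\le I(\alpha)$ and $R_i(\beta')\le R_i(\alpha)$ for every register $i$ and every $\alpha\in[\gamma,\beta']$. Since the computation reaches $\beta$, any such pair lying below $\beta$ is genuinely realised, so producing one immediately yields divergence and rules out a halt at a stage $\ge\beta$.

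First I would record the liminf structure on $[0,\beta)$. As $I$ takes only finitely many values, $i^{*}=\liminf_{\alpha<\beta}I(\alpha)$ is attained cofinally and is an eventual lower bound; fix $\eta_0<\beta$ past which $I(\alpha)\ge i^{*}$. For each register, at every limit $\lambda\le\beta$ survived by the computation either $\min(\SLimP(R_i^{\lambda}))\in\mathbb{R}$, so that $R_i(\lambda)=\min(\LimP(R_i^{\lambda}))$ is the least finite accumulation point of the preceding values, or $\min(\SLimP(R_i^{\lambda}))$ is an infinite strong limit and the register resets to $0$. Next I would package the existence of a coordinatewise-minimal, cofinally recurring configuration as a reflectable sentence. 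Writing $\mathrm{nh}$ for the predicate ``$P$ on $0$ has not halted'', I would consider a formula $\Phi$ of the shape
\[
\exists c\;\Big[\,\mathrm{nh}\;\wedge\;\forall\alpha\,\exists\alpha'>\alpha\;(C_{\alpha'}=c)\;\wedge\;\exists\eta\,\forall\delta>\eta\;(c\le C_\delta)\,\Big],
\]
asserting that some snapshot $c$ occurs cofinally and is eventually a coordinatewise lower bound. The crucial point is that the clause defining the limit values through $\min(\LimP)$ unfolds to ``for every rational $\varepsilon>0$ the values are eventually within $\varepsilon$ and cofinally below $c+\varepsilon$'', contributing exactly the extra quantifier alternation that makes the matrix a $\Pi_3$ assertion over $\mathbf{L}$; as $c$ ranges over the hereditarily finite codes for snapshots the leading existential is bounded, so $\Phi$ is reflected by $\Pi_3$-reflection. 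This is precisely why $\Pi_3$-reflection, rather than the mere admissibility relevant for $\ITBM$s, is the governing closure property.

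The heart of the argument is to verify that $\Phi$ holds in $\mathbf{L}_\beta$, and this is the step I expect to be the main obstacle. The difficulty is that a register liminf need not be attained at any single prior stage, so the naive candidate ``$c=C_\beta$'' can fail coordinatewise minimality (an increasing-to-$L$ pattern makes $C_\beta$ the supremum rather than the minimum of the interval); here one must exploit the interplay between the $\min(\SLimP)$ reset rule and the finiteness of the range of $I$, peeling off coordinates in order — first $I=i^{*}$ on a cofinal tail, then each register in turn — and using the reset-to-$0$ clause to rule out the unbounded-increase patterns that would otherwise destroy minimality. Once $\Phi$ is seen to hold in $\mathbf{L}_\beta$, $\Pi_3$-reflection delivers $\alpha<\beta$ with $\mathbf{L}_\alpha\models\Phi$; since the $\SITBM$-computation of $P$ is absolute between $\mathbf{L}_\alpha$ and $\mathbf{L}_\beta$ (it is defined by the same recursion and, by Remark~\ref{Rem2}, its limit values are insensitive to initial segments), the witness $c$ and two recurrence stages below $\alpha$ beyond the minimality threshold give a genuine pair $\gamma<\beta'<\beta$ satisfying the hypotheses of Lemma~\ref{Lem:strongLoop}. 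The Strong Loop Lemma then forces divergence, completing the dichotomy.
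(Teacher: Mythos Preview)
Your overall architecture---contrapositive, Strong Loop Lemma, reflection at $\beta$---matches the paper's, but the reflection step is misplaced and contains an error. The claim that the leading $\exists c$ in $\Phi$ is bounded because ``$c$ ranges over the hereditarily finite codes for snapshots'' is false: a snapshot $(R_1(\alpha),\ldots,R_n(\alpha),I(\alpha))$ is a tuple of \emph{real numbers} together with an integer, and even for a rational program those reals can be irrational at limit stages (they are $\liminf$'s of earlier values). So $\Phi$ as written is genuinely $\Sigma_4$ over $\mathbf{L}_\beta$, and $\Pi_3$-reflection does not apply to it.

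The paper (via Lemma~\ref{Lem:PI3CofOften} and Theorem~\ref{Thm:Pi3Bound}, which specialise to Theorem~\ref{Lem:Welch}) avoids this by not reflecting the existence of a loop at all. It takes $c=C_\beta$ as a \emph{parameter} in $\mathbf{L}_\beta$ and, for each register $i$ with $r=R_i(\beta)$, reflects the $\Pi_3$ sentence
\[
\forall r'<r\,\exists\alpha\,\forall\gamma>\alpha\;R_i(\gamma)>r'\quad\wedge\quad\forall r'>r\,\forall\alpha\,\exists\gamma>\alpha\;R_i(\gamma)<r',
\]
which says exactly that $r$ is the $\liminf$ of $R_i$ below the ambient height. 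Reflection yields cofinally many $\tau<\beta$ with $R_i(\tau)=r$ for every $i$, hence $C_\tau=C_\beta$ cofinally often. This dissolves the ``main obstacle'' you flagged: the naive candidate $c=C_\beta$ \emph{does} recur cofinally, and your worry that ``a register liminf need not be attained at any single prior stage'' is precisely what $\Pi_3$-reflection overturns. The coordinatewise minimality on $[\delta,\beta]$ then drops out by determinism rather than by a second reflection: if some $R_i(\delta+\gamma)<R_i(\beta)$ occurred, the same drop would recur after every $\tau$ with $C_\tau=C_\beta$, putting values below $R_i(\beta)$ cofinally often and contradicting $R_i(\beta)=\liminf$. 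In short, reflect the $\liminf$ characterisation with the limit value as parameter, not the loop statement with an unbounded existential witness; the ``peeling off coordinates'' and ``reset-to-$0$'' manoeuvres you sketch are then unnecessary.
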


\begin{lemma}\label{Lem:PI3CofOften}
Let $\beta$ be $\Pi_3$-reflecting, $\Theta>\beta$, $r\in \mathbb{R}\cap \mathbf{L}_\beta$ be a real number, and $(C_\alpha)_{\alpha\in \Theta}$ be an $\SITBM$-computation with $R_i(\beta)=r$. Then $R_i$ has value $r$ cofinally often below $\beta$, i.e., for all $\gamma<\beta$ there is $\gamma<\alpha<\beta$ such that $R_i(\alpha)=r$. In fact, there are cofinally in $\beta$ many $\tau$ such that $C_{\tau}=C_{\beta}$.
\end{lemma}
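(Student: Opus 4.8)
The plan is to analyze the register content $R_i$ as a $\beta$-sequence $R_i^\beta : \beta \to \mathbb{R}$ and exploit the $\SITBM$ limit rule together with $\Pi_3$-reflection. Since $R_i(\beta) = r$ and the limit rule computes $R_i(\beta) = \min(\SLimP(R_i^\beta))$ whenever this minimum is a real number, and here $r \in \mathbb{R} \cap \mathbf{L}_\beta$ is indeed real, the value $r$ must be a member of $\SLimP(R_i^\beta)$. The first task is therefore to unpack what it means for $r = \min(\SLimP(R_i^\beta))$: by the definition of finite and strong infinite limit points, $r$ being the minimum of $\SLimP$ forces $r$ to be a genuine finite limit point, which by definition means $R_i$ takes values arbitrarily close to $r$ cofinally often below $\beta$. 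The difficulty is that \emph{approaching} $r$ cofinally is weaker than \emph{equalling} $r$ cofinally, so the naive reading of the limit rule only gives the approximate statement; upgrading to exact equality and to the stronger claim about full snapshots $C_\tau = C_\beta$ is where the real work lies.

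First I would set up the reflection argument. The idea is that the whole computation $(C_\alpha)_{\alpha < \beta}$ up to stage $\beta$, together with the program $P$ and the relevant coding machinery, is definable over $\mathbf{L}_\beta$ in a sufficiently absolute way — this is why the hypothesis $r \in \mathbf{L}_\beta$ matters, as it lets us name $r$ inside $\mathbf{L}_\beta$. I would express the statement ``$r$ is approached cofinally but attained only boundedly often'' as a $\Sigma_3$ (equivalently, its negation as a $\Pi_3$) condition over $\mathbf{L}_\beta$, so that $\Pi_3$-reflection can be brought to bear. Concretely, I expect to phrase the negation of the desired conclusion — ``there is a bound $\gamma < \beta$ beyond which $R_i$ never exactly equals $r$'' — and show this leads to a contradiction with the exactness of the limit rule, using reflection to pull a witnessing configuration down to a smaller reflecting level and then argue the computation would have to have stabilized or produced the value $r$ already.

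The key steps, in order, would be: (1) establish that $r = \min(\SLimP(R_i^\beta))$ and extract from the definitions that $r$ is a finite limit point, giving the approximate cofinal-approach statement immediately; (2) formulate, as an appropriate arithmetical-in-$\mathbf{L}_\beta$ predicate, the assertion that $C_\tau = C_\beta$ holds cofinally, checking that the complexity is at the $\Pi_3$/$\Sigma_3$ level so reflection applies; (3) apply $\Pi_3$-reflection to transfer a relevant property from $\beta$ down to some $\beta' < \beta$, and use the deterministic nature of the computation (as in the Strong Loop Lemma, Lemma \ref{Lem:strongLoop}) to argue that the snapshot at $\beta$ must recur; (4) conclude both the exact-value statement and the stronger snapshot statement $C_\tau = C_\beta$. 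The main obstacle I anticipate is step (2)–(3): carefully controlling the logical complexity of the predicate describing the computation and the snapshot equality so that it genuinely falls within the scope of $\Pi_3$-reflection, and ensuring the reflected configuration can be propagated back up to $\beta$ to force recurrence of the full snapshot rather than mere numerical proximity. The determinism of $\SITBM$s, which already drove Claim \ref{Calim1} and Claim \ref{Claim2}, should be the lever that converts an approximate or one-register recurrence into full snapshot recurrence $C_\tau = C_\beta$.
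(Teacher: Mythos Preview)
Your plan has a real gap: you never identify \emph{which} $\Pi_3$ statement you intend to reflect, and the candidates you mention do not work. Reflecting ``$C_\tau=C_\beta$ holds cofinally'' is circular --- that is the conclusion, not a hypothesis available in $\mathbf{L}_\beta$. Your alternative, assuming for contradiction that $r$ is attained only boundedly often, is a $\Sigma_2$ statement (``$\exists\gamma\,\forall\alpha{>}\gamma\,R_i(\alpha)\neq r$''), and $\Pi_3$-reflection does not directly produce contradictions from true $\Sigma$ statements. You also invoke determinism and the Strong Loop Lemma to upgrade ``approximate'' recurrence to ``exact'' recurrence; that machinery belongs to the \emph{next} result (Theorem~\ref{Thm:Pi3Bound}), not here, and it is unclear how it would close the gap between ``$R_i$ comes close to $r$ cofinally'' and ``$R_i$ equals $r$ cofinally''.

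The paper's argument avoids all of this with a single observation you are missing: the \emph{limit rule itself} is $\Pi_3$-expressible. Concretely, for each $\tau<\beta$ one writes
\[
\phi_i:\ \forall r'{<}r\,\exists\alpha{>}\tau\,\forall\gamma{>}\alpha\ R_i(\gamma)>r',\qquad
\psi_i:\ \forall r'{>}r\,\forall\alpha{>}\tau\,\exists\gamma{>}\alpha\ R_i(\gamma)<r'.
\]
These are $\Pi_3$ with parameters $r,\tau\in\mathbf{L}_\beta$, and both hold in $\mathbf{L}_\beta$ precisely because $R_i(\beta)=r$ is the $\liminf$. Reflecting the conjunction gives cofinally many $\tau'<\beta$ with $\mathbf{L}_{\tau'}\models\phi_i\wedge\psi_i$; but $\phi_i\wedge\psi_i$ holding in $\mathbf{L}_{\tau'}$ says exactly that the $\liminf$ of $R_i$ up to $\tau'$ is $r$, so by the $\SITBM$ limit rule $R_i(\tau')=r$ on the nose. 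No approximation step, no determinism argument. The full-snapshot version follows by conjuncting over all registers (and storing the program line in an extra register), which keeps the formula $\Pi_3$. Your step~(1) correctly extracts that $r$ is a finite limit point, but you then take a detour; the point is that reflecting the $\Pi_3$ description of ``$r$ is the $\liminf$'' \emph{is} the whole proof.
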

\begin{proof}
Note that for every $\tau<\beta$ the following sentences $\phi_{i}:=\forall r'<r \exists \alpha>\tau \forall \gamma>\alpha R_i(\gamma)>r'$, and $\psi_{i}:=\forall r'>r \forall \alpha>\tau \exists \gamma>\alpha R_i(\gamma)<r'$  are $\Pi_3$ in $\mathbf{L}_\beta$. 
Moreover, they are both true in $\mathbf{L}_\beta$ since $R_i(\beta)=r$. Consequently, there are cofinally in $\beta$ many $\tau$ such that both statements hold in $\mathbf{L}_{\tau}$. However, this implies that $R_{i}(\tau)=R_{i}(\beta)$. Thus, for every $\tau<\beta$ there is $\tau<\alpha<\beta$ such that $R_i(\alpha)=r$. 

Now let $n$ be the maximal register index appearing in $P$. Then the conjunction $\bigwedge_{i\leq n}(\phi_{i}\wedge\psi_{i})$ is still $\Pi_3$ and thus holds at cofinally often in $\beta$. Thus, the register contents at time $\beta$ have appeared cofinally often before time $\beta$. Modifying $P$ slightly to $P^{\prime}$ by storing the active program line in an additional register, we see that the same actually holds for the whole configuration.
\end{proof}

\begin{theorem}\label{Thm:Pi3Bound}
Let $\beta$ be a $\Pi_3$-reflecting ordinal. Then, for every $(\mathbf{L}_\beta \cap \Rea)$-$\SITBM$ computation $(C_\alpha)_{\alpha\in \Theta}$ we have that either $\Theta=\Ord$ or $\Theta<\beta$. Moreover, if $\Theta=\Ord$ then the machine is in a strong loop.
\end{theorem}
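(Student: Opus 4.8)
The plan is to prove the dichotomy by contradiction: assume $\beta\le\Theta<\Ord$ and show that the computation is in fact in a strong loop, whence $\Theta=\Ord$. The heart of the argument is the case $\Theta>\beta$, in which $C_\beta$ is a genuine, non-halting configuration; termination (halting or crashing) exactly at stage $\beta$ is dealt with at the end. The strategy is to exhibit a single interval $[\gamma',\beta']$ with $\gamma'<\beta'<\Theta$ satisfying the hypotheses of the Strong Loop Lemma (Lemma~\ref{Lem:strongLoop}); its mere existence forces divergence, hence $\Theta=\Ord$, which contradicts $\Theta<\Ord$ and simultaneously yields the ``moreover'' clause.

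So assume $\Theta>\beta$. Each register content lies in $\mathbf{L}_\beta\cap\Rea$, so $R_i(\beta)\in\mathbf{L}_\beta\cap\Rea$ and Lemma~\ref{Lem:PI3CofOften} provides a set $S$ cofinal in $\beta$ with $C_\tau=C_\beta$ for all $\tau\in S$. Since $\beta$ is $\Pi_3$-reflecting it is in particular additively indecomposable, so for any fixed $\mu$ the stages $\tau+\mu$ with $\tau\in S$ are again cofinal in $\beta$. By determinism the computation issuing from $C_\beta$ is a fixed sequence $D_0=C_\beta,D_1,D_2,\dots$ with $C_{\tau+\mu}=D_\mu$ for every $\tau\in S$ (exactly as in the proof of Lemma~\ref{Lem:strongLoop}). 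As $C_\beta$ recurs at each element of $S$, this sequence returns to $C_\beta$; let $\sigma>0$ be least with $D_\sigma=C_\beta$, and note that $\sigma$ is bounded by the distance between two elements of $S$, hence $\sigma<\beta$. Consequently, for each $\mu<\sigma$ the configuration $D_\mu$ occurs at the cofinally many stages $\tau+\mu$ ($\tau\in S$), so $D_\mu$ appears cofinally often below $\beta$.

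The one delicate step, and the main obstacle, is to verify condition~\ref{SL2} of Lemma~\ref{Lem:strongLoop}: that $C_\beta$ is pointwise minimal across a whole period. This is precisely where the $\SITBM$ limit rule enters: by that rule $R_i(\beta)=\min(\LimP(R_i^\beta))$ is the least limit point of $R_i^\beta$, and likewise $I(\beta)=\liminf_{\alpha<\beta}I(\alpha)$. Since for $\mu<\sigma$ the value $R_i(D_\mu)$ is attained cofinally often, it is a (finite) limit point of $R_i^\beta$, whence $R_i(D_\mu)\ge R_i(\beta)$; similarly $I(D_\mu)\ge I(\beta)$. (The second clause of the rule, setting $R_i(\beta)=0$, cannot apply here: it would force $R_i$ to tend strongly to $\pm\infty$, incompatible with $C_\tau=C_\beta$ holding cofinally.) Now fix $\gamma'\in S$ and put $\beta'=\gamma'+\sigma$ with $\beta'<\beta$ (possible since $S$ is cofinal); then $C_{\gamma'}=C_{\beta'}=C_\beta$ gives~\ref{SL1}, and for every $\alpha=\gamma'+\mu$ with $\mu\le\sigma$ we have $R_i(\beta')=R_i(\beta)\le R_i(\alpha)$ and $I(\beta')\le I(\alpha)$, giving~\ref{SL2}. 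Lemma~\ref{Lem:strongLoop} then forces divergence, so $\Theta=\Ord$ and the computation is a strong loop; running the same construction whenever $\Theta=\Ord$ proves the ``moreover'' clause.

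It remains to rule out $\Theta=\beta$. If the machine halts at $\beta$, then $C_\beta$ is a well-defined halting configuration, the proof of Lemma~\ref{Lem:PI3CofOften} applies unchanged, and $C_\beta$ recurs at some $\tau<\beta$; being a halting configuration it would have halted the machine at $\tau$, contradicting $\Theta=\beta$. If instead the machine crashes at $\beta$ through register $i$, then inspecting the $\SITBM$ rule (and using that $\beta$ is admissible, so $\liminf$s of $\mathbf{L}_\beta$-sequences remain in $\mathbf{L}_\beta$) shows that every such crash forces $\liminf_{\alpha<\beta}R_i(\alpha)=-\infty$ with $-\infty$ not a strong limit point. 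Fix a rational $x_0$ with $R_i(\alpha)\ge x_0$ cofinally often; then both ``$\liminf R_i=-\infty$'' and, for this fixed $x_0$, ``$R_i\ge x_0$ cofinally often'' are $\Pi_2$ statements true in $\mathbf{L}_\beta$, so their conjunction reflects to cofinally many $\tau<\beta$. At any such $\tau$ the register $i$ has $\liminf=-\infty$ with $-\infty$ non-strong, so the machine already crashes at $\tau<\beta$, again contradicting $\Theta=\beta$. Hence $\Theta<\beta$ or $\Theta=\Ord$, as claimed.
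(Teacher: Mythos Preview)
Your argument is correct and follows essentially the same route as the paper: invoke Lemma~\ref{Lem:PI3CofOften} to get cofinal recurrence of $C_\beta$, use determinism plus additive indecomposability of $\beta$ to see that every intermediate configuration recurs cofinally, deduce the componentwise minimality of $C_\beta$ from the $\liminf$ limit rule, and feed the resulting interval into Lemma~\ref{Lem:strongLoop}. The only substantive differences are cosmetic---you verify condition~\ref{SL2} directly (each intermediate register value is a limit point, hence $\geq\min\LimP$) where the paper argues by contradiction, and you build the loop on $[\gamma',\gamma'+\sigma]$ rather than on $[\delta,\beta]$---together with the fact that you explicitly dispose of the boundary case $\Theta=\beta$, which the paper's proof silently omits; your treatment of halting at $\beta$ is clean, while the crash-at-$\beta$ analysis (in particular the appeal to admissibility to keep the limit value inside $\mathbf{L}_\beta$) is a little compressed but goes in the right direction.
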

\begin{proof}
Assume that $\Theta>\beta$. By Lemma \ref{Lem:PI3CofOften}, we have that the snapshot $C_\beta$ appears cofinally often before $\beta$. Now, we want to show that this means that the program is in a strong loop. Let $\delta<\beta$ be such that $C_\delta=C_\beta$. If there is no strong loop between times $\delta$ and $\beta$, there are a register index $i$ and an ordinal $\gamma$ such that $\delta+\gamma<\beta$ and $r:=R_i(\delta+\gamma)<R_i(\delta)$. Note, however, that the snapshot of a computation at time $\eta+\gamma$ is determined by the snapshot at time $\eta$. Thus, for every $\tau<\beta$ such that $C_{\tau}=C_{\beta}$, we have $R_{i}(\tau+\gamma)=R_{i}(\delta+\gamma)=r$. Since $\beta$ is $\Pi_3$-reflecting, we have $\tau+\gamma<\beta$ for every $\tau<\beta$. Thus, the content of $R_i$ is equal to $r$ cofinally often before $\beta$. Consequently, we have $r<R_{i}(\beta)=\text{liminf}_{\iota<\beta}R_{i}(\iota)\leq r$, a contradiction. Thus, the computation is in a strong loop. The claim follows by Lemma \ref{Lem:strongLoop}.
\end{proof}

In \cite[p.31]{Welch2014} Welch asked if the bound of Lemma \ref{Lem:Welch} was optimal. The following lemma shows that this is not the case.

\begin{lemma}
The supremum of the $\SITBM$-clockable ordinals is not a $\Pi_3$-reflecting ordinal.
\end{lemma}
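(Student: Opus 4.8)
The plan is to argue by contradiction: suppose $\sigma$, the supremum of the $\SITBM$-clockable ordinals, were $\Pi_3$-reflecting. Since every $\BSS$-program in this paper has rational coefficients and clockability is witnessed by computations on input $0$, a clockable ordinal is just the (necessarily unique) halting time of such a computation, and the program itself is coded by a natural number, so quantification over programs is quantification over $\omega$. As $\Pi_3$-reflecting ordinals are admissible, a $\Sigma_1$-recursion shows that for every program $P$ the snapshots $C_\xi$ of its computation on input $0$ for $\xi<\sigma$ all lie in $\mathbf{L}_\sigma$; hence each such computation, read up to time $\sigma$, is an $(\mathbf{L}_\sigma\cap\Rea)$-$\SITBM$-computation. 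Finally, since $\sigma$ is a limit ordinal which is the \emph{least} upper bound of the clockables, the clockable ordinals are cofinal in $\sigma$.

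Next I would isolate the dichotomy of Theorem \ref{Thm:Pi3Bound} as a reflecting sentence. Let $\Phi$ be the statement ``for every program $P$, the computation of $P$ on input $0$ either halts or contains a strong loop'', where ``contains a strong loop'' means that there are $\gamma<\beta$ satisfying conditions (\ref{SL1}) and (\ref{SL2}) of Lemma \ref{Lem:strongLoop}. Both disjuncts are $\Sigma_1$ (a halting snapshot, respectively a repeating configuration together with its \emph{bounded} monotonicity check, are witnessed by a set coding an initial segment of the computation), so $\Phi$ is $\Pi_2$, hence $\Pi_3$. I claim $\mathbf{L}_\sigma\models\Phi$: given $P$, by the previous paragraph its computation up to $\sigma$ is an $(\mathbf{L}_\sigma\cap\Rea)$-computation, so by Theorem \ref{Thm:Pi3Bound} it either halts before $\sigma$, or runs with $\Theta=\Ord$ and is in a strong loop; and by Lemma \ref{Lem:PI3CofOften} the repeating configuration witnessing that strong loop already occurs below $\sigma$. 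Either way $\mathbf{L}_\sigma$ sees the relevant witness.

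Then I would reflect. Because $\mathbf{L}_\sigma$ is admissible it satisfies $\Delta_0$-collection, whose instances are $\Pi_3$; reflecting $\Phi$ conjoined with enough of these yields an admissible $\tau<\sigma$ with $\mathbf{L}_\tau\models\Phi$. Admissibility of $\tau$ again guarantees, by $\Sigma_1$-recursion, that $\mathbf{L}_\tau$ computes every program's computation up to time $\tau$ correctly. Since the clockables are cofinal in $\sigma$ and $\tau<\sigma$, fix a clockable $\alpha$ with $\tau<\alpha<\sigma$ and a program $P^{*}$ whose computation on input $0$ halts exactly at $\alpha$. Applying $\mathbf{L}_\tau\models\Phi$ to $P^{*}$: either $\mathbf{L}_\tau$ sees $P^{*}$ halt, which by correctness means $P^{*}$ halts at some time $<\tau<\alpha$, contradicting that $\alpha$ is its unique halting time; or $\mathbf{L}_\tau$ sees a strong loop in $P^{*}$ below $\tau$, whence $P^{*}$ diverges by Lemma \ref{Lem:strongLoop}, again contradicting that it halts at $\alpha$. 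This contradiction shows that $\sigma$ is not $\Pi_3$-reflecting.

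The main obstacle I anticipate is the bookkeeping about where the computations ``live'': the argument only works because the reflected ordinal $\tau$ can be taken admissible, so that the truth of $\Phi$ in $\mathbf{L}_\tau$ genuinely refers to the real computation of $P^{*}$ rather than to some truncated or divergent $\mathbf{L}_\tau$-internal version. This is why I conjoin $\Delta_0$-collection to $\Phi$ before reflecting, and why it is essential that all register contents of a rational computation stay inside $\mathbf{L}_\sigma$ and $\mathbf{L}_\tau$. Verifying this absoluteness of $\SITBM$-computations with respect to admissible levels of $\mathbf{L}$ — in particular that the $\liminf$-style limit rule of Section~2 is evaluated there exactly as in $\Ve$ — is the one point that must be carried out with care.
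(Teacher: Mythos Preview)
Your approach is essentially the same as the paper's: reflect the $\Pi_3$ sentence asserting that every computation either halts or diverges (via a strong loop) below the supremum $\sigma$, then use a clockable ordinal above the reflection point to obtain a contradiction. The paper's proof is a single sentence stating exactly this idea, leaving implicit all of the bookkeeping you spell out (the precise formulation of $\Phi$, that computations live in $\mathbf{L}_\sigma$ and in the reflected $\mathbf{L}_\tau$, and the need for $\tau$ to be admissible).
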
 
\begin{proof}
It is enough to note that the sentence expressing the fact that every $\SITBM$-computation either diverges or stops is $\Pi_3$, and can therefore be reflected below any $\Pi_3$-reflecting ordinal.
\end{proof}

\begin{corollary}
Let $\alpha$ be an ordinal. The supremum of the ordinals clockable by an $(\mathbf{L}_\alpha\cap \Rea)$-$\SITBM$  is strictly smaller than the first $\Pi_3$-reflecting ordinal above $\alpha$. 
\end{corollary}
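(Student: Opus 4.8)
The plan is to fix $\alpha$, let $\beta$ be the first $\Pi_3$-reflecting ordinal above $\alpha$, and write $\sigma$ for the supremum of the ordinals clockable by $(\mathbf{L}_\alpha\cap\Rea)$-$\SITBM$s; the goal is $\sigma<\beta$. First I would record the soft bound $\sigma\le\beta$. Since $\alpha\le\beta$ we have $\mathbf{L}_\alpha\cap\Rea\subseteq\mathbf{L}_\beta\cap\Rea$, so every $(\mathbf{L}_\alpha\cap\Rea)$-$\SITBM$-computation is in particular an $(\mathbf{L}_\beta\cap\Rea)$-$\SITBM$-computation. By Theorem \ref{Thm:Pi3Bound} each such computation either halts at a stage $<\beta$ or runs through all of $\Ord$; since a clockable ordinal arises from a halting computation, it is $<\beta$, and hence $\sigma\le\beta$.

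To upgrade $\le$ to $<$, I would mirror the reflection argument of the preceding lemma, relativised to the parameter $\mathbf{L}_\alpha$. Let $\Phi$ be the sentence, with $\mathbf{L}_\alpha$ as a parameter, asserting that for every real $x\in\mathbf{L}_\alpha$ coding a $\BSS$-program $P$, the $(\mathbf{L}_\alpha\cap\Rea)$-$\SITBM$-computation of $P$ on input $0$ either reaches a halting configuration or settles into a strong loop. As in the preceding lemma this sentence is $\Pi_3$, and the register restriction ``$x\in\mathbf{L}_\alpha$'' is $\Delta_0$ in the parameter $\mathbf{L}_\alpha$, so it does not raise the complexity. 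By Theorem \ref{Thm:Pi3Bound} together with Lemma \ref{Lem:PI3CofOften} (which guarantees that the limit configuration of a non-halting computation recurs cofinally below $\beta$, thereby supplying the loop witnesses inside $\mathbf{L}_\beta$), $\Phi$ holds in $\mathbf{L}_\beta$.

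Since $\beta$ is $\Pi_3$-reflecting and $\mathbf{L}_\alpha\in\mathbf{L}_\beta$, reflecting $\Phi$ with the parameter $\mathbf{L}_\alpha$ yields an ordinal $\tau$ with $\mathbf{L}_\tau\models\Phi$; using $\mathbf{L}_\alpha$ as a parameter forces $\mathbf{L}_\alpha\in\mathbf{L}_\tau$, hence $\alpha<\tau<\beta$. Now take any program $P$ clockable by an $(\mathbf{L}_\alpha\cap\Rea)$-$\SITBM$, witnessed by a genuinely halting computation. As $\tau$ is admissible, this computation and the hypotheses of the Strong Loop Lemma are absolute between $\mathbf{L}_\tau$ and $V$ below $\tau$. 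By $\Phi$ in $\mathbf{L}_\tau$, $P$ either halts before $\tau$ or exhibits a strong loop before $\tau$; the latter is impossible, since by Lemma \ref{Lem:strongLoop} a strong loop forces divergence, contradicting that $P$ halts. Hence $P$ halts before $\tau$, so every clockable ordinal is $<\tau$, giving $\sigma\le\tau<\beta$ as required.

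The hardest part will be the exact $\Pi_3$ bookkeeping for $\Phi$: one must phrase the ``strong loop'' disjunct so that it is simultaneously $\Pi_3$, genuinely witnessed below $\beta$ for every non-halting computation, and absolute enough that, holding in $\mathbf{L}_\tau$, it can only be realised by a true halt below $\tau$ when $P$ actually halts. The subtlety in the second point is that the naive ``repeat configuration with the monotonicity of Lemma \ref{Lem:strongLoop}'' has upper witness $\beta$ itself, which $\mathbf{L}_\beta$ does not see; this is precisely why one routes the loop disjunct through the cofinal recurrence of the limit configuration provided by Lemma \ref{Lem:PI3CofOften}. Since this is exactly the $\Pi_3$ formulation already granted by the preceding lemma, I would invoke that formulation rather than re-deriving the complexity count from scratch.
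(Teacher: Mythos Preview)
Your approach is the paper's intended one (the corollary is stated without proof, as an immediate combination of Theorem~\ref{Thm:Pi3Bound} with the $\Pi_3$-reflection of the preceding lemma, now carried out with parameter $\mathbf{L}_\alpha$), and your outline is correct. Two small corrections are worth making. First, your assertion that the reflected $\tau$ is admissible is unjustified and in fact unnecessary: the absoluteness you need holds because ``halts'' and ``has a strong loop between $\gamma_0$ and $\gamma_1$'' are $\Sigma_1$ assertions about the existence of a computation sequence, and ``is a computation sequence'' is itself absolute between transitive models containing $\mathbf{L}_\alpha$ (the $\liminf$ clause can be phrased with rational $\varepsilon$). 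Second, the difficulty you flag in the final paragraph is illusory. The proof of Theorem~\ref{Thm:Pi3Bound} shows that from \emph{every} $\delta<\beta$ with $C_\delta=C_\beta$ the configurations stay componentwise $\geq C_\beta$ all the way to $\beta$; since Lemma~\ref{Lem:PI3CofOften} supplies cofinally many such $\delta$, any two of them $\delta_0<\delta_1<\beta$ already witness a strong loop entirely inside $\mathbf{L}_\beta$. Hence the naive $\Sigma_1$ ``strong loop'' disjunct works as stated, and $\Phi$ is in fact $\Pi_2$ without any rerouting.
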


A $\BSS$-program \emph{slow} if its $\SITBM$-computation $(C_\beta)_{\beta\in \Theta}$ is such that for every $\omega\leq \beta\leq \Theta$ we have $C_\beta\in \mathbf{L}_\beta$. 

\begin{corollary}
For every $\alpha$ the supremum of the ordinals that are $(\mathbf{L}_\alpha\cap \Rea)$-$\SITBM$-clockable by a slow program is smaller than or equal to the first $\Pi_3$-reflecting ordinal.
\end{corollary}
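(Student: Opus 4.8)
The plan is to reduce the statement to the argument already carried out in Theorem \ref{Thm:Pi3Bound}, using slowness to play the role that the restriction to $\mathbf{L}_\beta\cap\Rea$ plays there. Fix $\alpha$ and let $\beta_0$ be the first $\Pi_3$-reflecting ordinal. Let $P$ be a slow $\BSS$-program whose $(\mathbf{L}_\alpha\cap\Rea)$-$\SITBM$-computation $(C_\gamma)_{\gamma<\Theta}$ halts, and suppose towards a contradiction that $\Theta>\beta_0$. The goal is to derive that $P$ in fact diverges, contradicting halting, so that every such $\Theta$ satisfies $\Theta\le\beta_0$ and the supremum is $\le\beta_0$.

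First I would observe that slowness supplies exactly the constructibility that Theorem \ref{Thm:Pi3Bound} obtained from the $\mathbf{L}_\beta\cap\Rea$-restriction, and does so regardless of how large $\alpha$ is. Since $P$ is slow and $\Theta>\beta_0$, for every $\omega\le\gamma\le\beta_0$ we have $C_\gamma\in\mathbf{L}_\gamma\subseteq\mathbf{L}_{\beta_0}$; in particular each register value $R_i(\beta_0)$ is a real in $\mathbf{L}_{\beta_0}\cap\Rea$. Since $\beta_0$, being $\Pi_3$-reflecting, is admissible, the snapshot recursion (the $\BSS$-step at successors and the $\SLimP$/$\liminf$ rule at limits, both arithmetic in the sequence of previous snapshots and hence absolute) can be carried out inside $\mathbf{L}_{\beta_0}$: slowness guarantees the values never leave $\mathbf{L}_{\beta_0}$, so the recursion does not outrun the level, and the map $\gamma\mapsto C_\gamma$ for $\gamma<\beta_0$ is definable over $\mathbf{L}_{\beta_0}$. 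This is precisely the set-up required by Lemma \ref{Lem:PI3CofOften} and by the proof of Theorem \ref{Thm:Pi3Bound}.

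Next I would run those two arguments verbatim with $\beta_0$ in place of $\beta$. Using the definable computation, the sentences $\phi_i$ and $\psi_i$ asserting that $R_i$ stabilises to $R_i(\beta_0)$ from below are $\Pi_3$ over $\mathbf{L}_{\beta_0}$ and hold there; by $\Pi_3$-reflection the conjunction $\bigwedge_{i\le n}(\phi_i\wedge\psi_i)$ reflects cofinally often below $\beta_0$, so that the whole snapshot $C_{\beta_0}$ occurs cofinally often below $\beta_0$. The argument of Theorem \ref{Thm:Pi3Bound} then shows that between some occurrence $C_\delta=C_{\beta_0}$ and stage $\beta_0$ no register value ever drops below its value at $\delta$: otherwise, since the snapshot at $\tau+\gamma$ is determined by that at $\tau$ and $C_\tau=C_{\beta_0}$ holds cofinally, that smaller value would recur cofinally below $\beta_0$, contradicting $R_i(\beta_0)=\liminf_{\iota<\beta_0}R_i(\iota)$. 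Hence the computation is in a strong loop, and by the Strong Loop Lemma (Lemma \ref{Lem:strongLoop}) it diverges, contradicting the assumption that $P$ halts. Therefore $\Theta\le\beta_0$, and taking the supremum over all $(\mathbf{L}_\alpha\cap\Rea)$-$\SITBM$-clockable ordinals of slow programs gives the claim.

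The main obstacle is the definability step. One must verify carefully that slowness genuinely lets the entire initial segment $(C_\gamma)_{\gamma<\beta_0}$ be defined inside $\mathbf{L}_{\beta_0}$, so that $\phi_i$ and $\psi_i$ are bona fide $\Pi_3$ sentences of $\mathbf{L}_{\beta_0}$ to which reflection applies. This rests on two points: that $\beta_0$ is admissible, so $\Sigma_1$-recursion along ordinals $<\beta_0$ is available to reconstruct the snapshot function; and that slowness pins each $C_\gamma$ inside $\mathbf{L}_\gamma$, preventing the recursion from escaping $\mathbf{L}_{\beta_0}$ even though the machine is a priori only restricted to $\mathbf{L}_\alpha\cap\Rea$ with $\alpha$ possibly far above $\beta_0$. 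Once this containment is secured, the remainder is a transcription of the proofs of Lemma \ref{Lem:PI3CofOften} and Theorem \ref{Thm:Pi3Bound}, and no new combinatorial content is needed.
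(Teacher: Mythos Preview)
Your proposal is correct and takes essentially the same approach as the paper intends: the corollary is stated in the paper without proof, as an immediate consequence of Theorem~\ref{Thm:Pi3Bound}, and you correctly identify that slowness forces $C_\gamma\in\mathbf{L}_\gamma\subseteq\mathbf{L}_{\beta_0}$ for all $\gamma\le\beta_0$, so that the proof of Theorem~\ref{Thm:Pi3Bound} applies verbatim with $\beta=\beta_0$ regardless of how large $\alpha$ is. Your discussion of the definability step (that admissibility of $\beta_0$ together with slowness makes the snapshot function definable over $\mathbf{L}_{\beta_0}$, so that $\phi_i,\psi_i$ are genuine $\Pi_3$ sentences there) makes explicit a point the paper leaves implicit in Lemma~\ref{Lem:PI3CofOften}, but this is elaboration rather than a different route.
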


Let $P$ be a $\BSS$-program whose $\SITBM$-computation $(C_\beta)_{\beta\in \Ord}$ diverges. We will call the \emph{strong looping time} of $P$ the least ordinal $\alpha$ such that there is $\beta<\alpha$ and $(C_\gamma)_{\beta\leq \gamma\leq \alpha}$ is a strong loop. Analogously to what was proved in \cite[Theorem 5]{Carl2008} for $\ITRM$, one can prove that strong loops characterise divergent $\SITBM$-computation.

\begin{lemma}
Every program $P$ whose $\SITBM$-computation $(C_\beta)_{\beta\in \Ord}$ diverges has a strong looping time. 
\end{lemma}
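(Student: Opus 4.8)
The plan is to prove the existence statement directly, exploiting a cardinality mismatch. Since the computation $(C_\beta)_{\beta\in\Ord}$ is defined on all of $\Ord$ but there are only set-many possible snapshots — each $C_\alpha$ lies in $\Rea^n\times\{\text{program lines}\}$, a set of size $\le 2^{\aleph_0}$ — some single snapshot must occur on a proper class of stages, hence cofinally in $\Ord$. I would first tame the program counter: as there are only finitely many lines, let $\ell$ be the least line assumed cofinally often; past some $\tau_0$ we have $I(\alpha)\ge\ell$ with equality cofinally often. Working in this tail and among stages with $I=\ell$, I would isolate the pointwise liminf of the register contents and try to realize it as the bottom of a loop.

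Concretely, I would take $C^{*}$ to be a coordinatewise-least snapshot among those occurring cofinally often, and argue that $C^{*}$ is produced by the limit rule at cofinally many limit stages: at any limit $\lambda$ that is an accumulation point of the stages where $C^{*}$ recurs, each value $R_i^{*}$ is a limit point of $R_i\restriction\lambda$, so $R_i(\lambda)=\min(\SLimP(R_i\restriction\lambda))\le R_i^{*}$ and $I(\lambda)=\liminf I\le\ell$, and a careful choice of $\lambda$ forces equality. The existence of a coordinatewise-least such snapshot is exactly where the resetting rule enters: an infinite strictly descending chain of cofinally occurring snapshots would drive some register downward without bound, so that $-\infty$ becomes an infinite strong limit point and the rule resets that register to $0$, contradicting the assumption that arbitrarily small finite values of that register recur as coordinates of a fixed recurring snapshot. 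This termination of descent, which in the discrete $\ITRM$ setting of \cite[Theorem 5]{Carl2008} is automatic, is supplied here by the $\to 0$ behaviour.

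With $C^{*}$ realized at cofinally many limits, I would then exhibit two stages $\beta<\alpha$ with $C_\beta=C_\alpha=C^{*}$ and $C^{*}$ pointwise minimal on $[\beta,\alpha]$, which is precisely a strong loop in the sense of Lemma \ref{Lem:strongLoop}. I expect the minimality clause to be the main obstacle: because the registers are real-valued rather than discrete, values may approach $R_i^{*}$ \emph{from below}, which would violate condition (2) of Lemma \ref{Lem:strongLoop}. The plan is to rule out such persistent sub-$C^{*}$ dips by the same resetting mechanism together with determinism and the shift-invariance of limit points (Remark \ref{Rem2}): sustained behaviour strictly below $C^{*}$ in some register either reproduces, by coordinatewise minimality of $C^{*}$, a cofinally occurring snapshot $\preceq C^{*}$ — forcing equality and hence no strict dip — or is driven by an auxiliary register that the $\to 0$ rule resets, thereby re-synchronizing the full snapshot. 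Controlling these real-valued dips is the delicate point, and it is the step I expect to require the most care.

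Finally, once a single strong loop $(C_\gamma)_{\beta\le\gamma\le\alpha}$ has been produced, the collection of ordinals $\alpha$ that close some strong loop is a nonempty class of ordinals and therefore has a least element; by definition this least ordinal is the strong looping time of $P$, which establishes the statement.
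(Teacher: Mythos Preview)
There is a real gap in your plan, and it occurs earlier than the step you flag as delicate. The instruction ``take $C^{*}$ to be a coordinatewise-least snapshot among those occurring cofinally often'' is not justified. Your argument for it is that an infinite strictly descending chain of cofinally recurring snapshots would drive some register to $-\infty$ and trigger the reset-to-$0$ rule; but a strictly descending sequence of reals need not be unbounded below --- the $R_i$-coordinates might run through $1+\tfrac{1}{n}$, say, with infimum $1$ --- and then the reset rule never fires. Nothing in your sketch rules out the set of cofinally recurring snapshots being an infinite antichain, or a chain with no minimum, in the product order on $\mathbb{R}^{n}$. (Incidentally, even in the $\ITRM$ case the existence of such a $C^{*}$ is not ``automatic'' from well-foundedness of $\omega$ in each coordinate; what makes that argument go through is that the limit rule itself \emph{produces} the coordinatewise $\liminf$ as an actual snapshot at suitable limit stages, which is a different mechanism from termination of descent.) Without $C^{*}$ in hand, the later ``no sub-$C^{*}$ dip'' discussion, which you already leave only heuristic, has nothing to anchor to.

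The paper takes an entirely different route that sidesteps both difficulties at once: it simply applies Theorem~\ref{Thm:Pi3Bound} with $\beta=\omega_1$. Since $\omega_1$ is $\Pi_3$-reflecting and every real arising in a parameter-free $\SITBM$-computation is constructible, the computation is an $(\mathbf{L}_{\omega_1}\cap\mathbb{R})$-$\SITBM$-computation, and that theorem yields directly that a divergent such computation is in a strong loop. The looping snapshot is \emph{named} as $C_{\omega_1}$ rather than searched for combinatorially, and $\Pi_3$-reflection (Lemma~\ref{Lem:PI3CofOften}) together with one application of determinism and the $\liminf$ rule handles the minimality on the interval inside the proof of Theorem~\ref{Thm:Pi3Bound}.
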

\begin{proof}
Take $\beta=\omega_1$ in Theorem \ref{Thm:Pi3Bound}\footnote{It is a classical result of set theory that $\omega_1$ is a $\Pi_3$-reflecting ordinal. Indeed, via a classical hull construction and an application of the Condensation Lemma one can easily find an $\alpha\in \omega_1$ such that $\mathbf{L}_\alpha\prec \mathbf{L}_{\omega_1}$.}. Then, every divergent $(\mathbb{L}_{\omega_1}\cap \mathbb{R}^{\mathbf{L}})$-$\SITBM$-computation must have a strong loop. Finally, it is enough to note that since $\mathbb{L}_{\omega_1}\cap \mathbb{R}=\mathbb{R}^{\mathbf{L}}$ we have that $(\mathbb{L}_{\omega_1}\cap \mathbb{R}^{\mathbf{L}})$-$\SITBM$-computation and $\SITBM$-computations coincide.
\end{proof}

The following is an analogue of the observation in \cite{Hamkins} that the supremum $\Sigma$ of the looping times for $\ITTM$s is not admissible.

\begin{lemma}
The supremum $\gamma$ of the strong looping times is not $\Pi_2$-reflecting thus in particular not admissible.
\end{lemma}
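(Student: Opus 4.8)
The plan is to imitate the proof of the preceding lemma (that the supremum of the clockable ordinals is not $\Pi_3$-reflecting): I will exhibit a $\Pi_2$ sentence $\Phi$ that holds in $\mathbf{L}_\gamma$ and whose reflection to a smaller level contradicts the definition of $\gamma$ as a supremum. Concretely, let $\Phi$ be the sentence ``for every $\BSS$-program $p$, the $\SITBM$-computation of $p$ on input $0$ either halts or enters a strong loop''. Writing $\mathrm{Halt}(p)$ for ``$p$ halts'' and $\mathrm{Loop}(p)$ for ``$p$ enters a strong loop'', $\Phi$ is $\forall p\,(\mathrm{Halt}(p)\vee\mathrm{Loop}(p))$. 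For the final clause of the statement I will use the standard fact that every admissible ordinal is $\Pi_2$-reflecting, so that failing $\Pi_2$-reflection indeed entails non-admissibility.

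First I would check that $\Phi$ is $\Pi_2$. Both $\mathrm{Halt}(p)$ and $\mathrm{Loop}(p)$ are $\Sigma_1$: each asserts the existence of an initial segment $(C_\delta)_{\delta\le\alpha}$ of the computation of $p$ together with a witness (a halting stage, respectively a pair $\beta_p<\alpha$ satisfying the two conditions of Lemma \ref{Lem:strongLoop}). The only point needing care is that ``$(C_\delta)_{\delta\le\alpha}$ is the correct $\SITBM$-computation'' is $\Delta_1$: successor steps are checked locally, and at limit stages the value is $\min(\SLimP(\cdot))$, which is a definable function of the (set-sized) preceding segment, since membership in $\SLimP$ is expressible by bounded quantification over the domain of the segment and over the rationals. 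Hence $\mathrm{Halt}(p)\vee\mathrm{Loop}(p)$ is $\Sigma_1$ and $\Phi=\forall p\,(\Sigma_1)$ is $\Pi_2$.

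Next I would argue that $\mathbf{L}_\gamma\models\Phi$, i.e. that every program halts, or completes a strong loop, at some stage strictly below $\gamma$. For a divergent program this requires that its strong looping time $\sigma_p$ satisfies $\sigma_p<\gamma$, i.e. that $\gamma$ is a strict supremum. I would establish this by a slowdown construction: from a divergent program $p$ one builds $p'$ simulating $p$ but inserting a dummy step between consecutive steps, so that a strong loop of $p$ on $[\beta_p,\sigma_p]$ becomes a strong loop of $p'$ of length roughly $\sigma_p\cdot 2$; since $\sigma_{p'}$ is again a strong looping time and hence $\le\gamma$, we get $\sigma_p\cdot 2\le\gamma$ and therefore $\sigma_p<\gamma$. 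For a halting program, padding the halt instruction into a stationary self-loop (a single step with $C_\eta=C_{\eta+1}$) turns a halt at $\eta$ into a strong loop at $\eta+1$, so every halting time is likewise below $\gamma$. Thus all relevant events occur below $\gamma$ and $\mathbf{L}_\gamma\models\Phi$. I expect this step — verifying that the slowdown really preserves the strong-loop conditions through the $\liminf$ limit rule, and hence that $\gamma$ is a strict supremum — to be the main obstacle.

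Finally, assuming $\gamma$ is $\Pi_2$-reflecting, I would reflect $\Phi$ to obtain $\delta<\gamma$ with $\mathbf{L}_\delta\models\Phi$. Because reflecting ordinals are admissible, the $\SITBM$-computation of each $p$ up to any stage below $\delta$ is absolute between $\mathbf{L}_\delta$ and $V$ (the same absoluteness of the limit rule used in Theorem \ref{Thm:Pi3Bound}), so $\mathbf{L}_\delta\models\mathrm{Loop}(p)$ yields a genuine strong loop completed before $\delta$. Hence every divergent program has $\sigma_p<\delta$, giving $\gamma=\sup_p\sigma_p\le\delta<\gamma$, a contradiction. Therefore $\gamma$ is not $\Pi_2$-reflecting, and in particular not admissible. (Alternatively, the same conclusion follows by a $\Sigma_1$-replacement argument: the map $p\mapsto\sigma_p$ is $\Sigma_1$ over $\mathbf{L}_\gamma$ with domain a subset of $\omega$ and range cofinal in $\gamma$, which is impossible if $\gamma$ is admissible.)
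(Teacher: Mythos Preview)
Your approach is genuinely different from the paper's, and the obstacle you flag is in fact fatal. Inserting a dummy step between consecutive steps does \emph{not} dilate the strong looping time: at a limit stage $\lambda$ the inserted no-ops leave every $\liminf$ unchanged, so the configuration of the slowed program at time $\lambda$ agrees with that of the original at time $\lambda$. (Concretely: if $p$ is ``$R_1:=R_1+1$ forever'' then $\sigma_p=\omega$, and your padded $p'$ still has $\sigma_{p'}=\omega$; the doubling only stretches finite blocks between consecutive limits.) No local, step-by-step slowdown will push a limit-ordinal strong looping time upward, because the limit rule re-synchronises everything at the next limit. Worse, the paper's own argument \emph{uses} that $\gamma$ is attained: it takes the universal program $U$ and invokes $\sigma_U=\gamma$ (the analogue of the Hamkins--Lewis fact that the universal $\ITTM$ loops exactly at $\Sigma$). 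If that is right, then $\mathbf{L}_\gamma\models\Phi$ is simply false, since in $\mathbf{L}_\gamma$ the program $U$ neither halts nor has completed a strong loop. Your $\Sigma_1$-replacement alternative has the same defect: the relation ``$y$ is a strong looping time of $p$'' is not total on $\omega$ inside $\mathbf{L}_\gamma$ (it has no witness at $p=U$), so $\Sigma_1$-collection is not violated.

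The paper avoids all of this by working with the single program $U$ rather than a universal sentence. Fixing the start $\iota<\gamma$ of $U$'s loop and the values $r_i=R_i(\iota)$ as parameters, it writes the $\Pi_2$ conjunction of ``for all $\xi>\iota$ the configuration dominates $C_\iota$ componentwise'' with, for each register, ``for every rational $q>r_i$ and every $\tau$ there is $\zeta>\tau$ with $R_i(\zeta)<q$''. These hold in $\mathbf{L}_\gamma$ precisely because the loop closes at $\gamma$; reflecting to some $\mathbf{L}_\tau$ with $\iota<\tau<\gamma$ forces $R_i(\tau)=r_i$ for each $i$ and the minimality condition on $[\iota,\tau]$, i.e.\ a strong loop of $U$ ending at $\tau$, contradicting $\sigma_U=\gamma$. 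The moral: ``this particular loop closes at the top'' is $\Pi_2$ with parameters, whereas your ``every program has already looped'' needs the supremum to be strict, which you have not shown and which is in tension with the very existence of a universal machine.
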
 
\begin{proof}
Suppose otherwise. Let $U$ be the universal program introduced above. Since $U$ does not halt, it will loop. By definition of $\gamma$, the first loop of $U$ will be finished exactly at time $\gamma$.  Thus, there is $\iota<\gamma$ such that the configurations of $U$ at times $\iota$ and $\gamma$ agree and are component-wise (weakly) majorized by all configurations occuring in between. Let $\phi$ denote the statement ``for all $\xi>\iota$, the configuration at time $\xi$ is component-wise greater than or equal to that at time $\iota$"; clearly, $\phi$ is $\Pi_2$.

Now let $i$ be the index of a register used in $U$, and let $r$ be its content at time $\iota$ (and thus at time $\gamma$).
Now let $\psi_{i}$ be the statement ``for all rational $q>r$ and all $\tau$, there is $\zeta>\tau$ such that, at time $\zeta$, the content of $R_{i}$ is $<q$", which is again $\Pi_2$. 

Now, the conjunction $\tilde{\phi}$ of $\phi$ and all the $\psi_{i}$ is a $\Pi_2$-formula expressing that $U$ strongly loops. Since $\gamma$ is $\Pi_2$-reflecting by assumption, there is $\tau<\gamma$ such that $L_{\tau}\models\tilde{\phi}$. But then, $U$ already loops at time $\tau$, contradicting the definition of $\gamma$.

\end{proof}

We end this section by showing that the model of computation obtained by restricting $\SITBM$s to reals in  $\BSS$-programs is strictly weaker than the unrestricted one.

\begin{lemma}\label{Lem:CompSnapLemma}
Let $\beta$ be such that there is an $\SITBM$-computable code for $\mathbf{L}_{\beta}$. 
There is a $\BSS$-program $P$ whose $\SITBM$-computation decides the halting problem for $(\mathbf{L}_{\beta}\cap\mathbb{R})$-$\SITBM$s, i.e., given the code $c$ for a $\BSS$-program, $P$ halts with output $0$ if the $(\mathbf{L}_{\beta}\cap\mathbb{R})$-$\SITBM$-computation of the program coded by $c$ with input $0$ halts, and $P$ halts with output $1$ if the $(\mathbf{L}_{\beta}\cap\mathbb{R})$-$\SITBM$-computation of the program coded by $c$ with input $0$ diverges. (If $c$ codes a program that does not produce an $(\mathbf{L}_{\beta}\cap\mathbb{R})$-$\SITBM$-computation, we make no statement about the behaviour of $P$, although it is easily possible to arrange $P$ to, e.g.,  diverge or halt with output $2$ in this case.)
\end{lemma}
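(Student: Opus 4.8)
The plan is to have the full, unrestricted $\SITBM$ program $P$ decide halting not by a real-time simulation of the restricted machine, but by \emph{searching for a witness} inside $\mathbf{L}_\beta$. Write $X=\mathbf{L}_\beta\cap\mathbb{R}$. First $P$ uses the hypothesis to write, into a dedicated register, an $\SITBM$-computable code $c_\beta$ for $\mathbf{L}_\beta$; from such a code an $\SITBM$ can, by the usual $\ITTM$-style recursion on formula complexity along the well-founded relation coded by $c_\beta$, decide membership in $\mathbf{L}_\beta$ and evaluate the truth of arbitrary first-order statements in $(\mathbf{L}_\beta,\in)$ (these are exactly the algorithms already invoked in the sketch of Lemma~\ref{Lem:Iter}). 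I would then reformulate the halting question in terms of two kinds of witnesses: a \emph{halting witness} is a sequence $s\in\mathbf{L}_\beta$ which is an initial $X$-$\SITBM$-computation of the program coded by $c$ (starting from the input configuration, obeying the successor rule and the limit rule $\min(\SLimP(\cdot))$ at limit stages, with all register contents in $X$) whose last snapshot is a halting configuration; a \emph{loop witness} is such an initial computation whose final block is a strong loop in the sense of Lemma~\ref{Lem:strongLoop}.

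With this set up, $P$ enumerates the elements of $\mathbf{L}_\beta$ through $c_\beta$ and, for each, checks whether it is a halting witness or a loop witness. Since $s$ is an element of $\mathbf{L}_\beta$ and all the reals occurring in it lie in $\mathbf{L}_\beta$, the predicates \vir{$s$ is a valid initial $X$-computation of $c$}, \vir{$s$ ends in a halting state} and \vir{$s$ ends in a strong loop} are first-order over $(\mathbf{L}_\beta,\in)$ (the register updates are rational functions and comparisons of reals in $\mathbf{L}_\beta$, and $\min(\SLimP)$ of a sequence lying in $\mathbf{L}_\beta$ is definable there), so $P$ can decide them using $c_\beta$. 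As soon as a witness is found, $P$ halts with output $0$ for a halting witness and $1$ for a loop witness. Correctness requires that at most one kind of witness exists: this holds because the $X$-computation of $c$ is unique, and by Lemma~\ref{Lem:strongLoop} a strong loop forces divergence, so the computation cannot both reach a halting configuration and, before doing so, close a strong loop; hence the two searches cannot both succeed on genuine initial computations.

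The main obstacle is the \emph{completeness} of this search: I must show that whenever $c$'s $X$-$\SITBM$-computation halts, or diverges while staying in $X$, the corresponding initial computation (up to the halting time, resp.\ up to the strong looping time) is actually an \emph{element} of $\mathbf{L}_\beta$, so that the enumeration meets it. For the divergent case I would invoke the converse of the Strong Loop Lemma established above (every divergent $\SITBM$-computation has a strong looping time), which guarantees that a strong loop does form; the point to argue is that it forms at a time below $\beta$. In both cases the computation of $c$ is uniformly $\Delta_1$-definable over $\mathbf{L}_\beta$ from the parameter $c$, so a reflection/condensation argument (in the spirit of Theorem~\ref{Thm:Pi3Bound}, and using the closure of $\mathbf{L}_\beta$ forced by the existence of the code $c_\beta$) should show that the least halting time and the least strong looping time are below $\beta$ and that the corresponding bounded initial segment is collected into $\mathbf{L}_\beta$. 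This boundedness-and-collection step is where the real work lies; the rest is routine coding of the two witness predicates.

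Finally, for the degenerate case where $c$ does not code a genuine $X$-computation (some register eventually leaves $X$), I would add a third witness type detecting the first snapshot at which a register value is forced outside $\mathbf{L}_\beta$ and have $P$ halt with output $2$; alternatively one simply lets $P$ run forever, since then no halting or loop witness exists in $\mathbf{L}_\beta$. Either way this matches the latitude the statement explicitly allows.
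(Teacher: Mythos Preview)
Your witness-search approach is genuinely different from the paper's, and the gap you yourself flag is real and not closed by the argument you sketch. The hypothesis is merely that some code for $\mathbf{L}_\beta$ is $\SITBM$-computable; this imposes \emph{no} closure on $\beta$ itself. In particular $\beta$ need not be $\Pi_3$-reflecting, admissible, or even closed under ordinal addition (already $\beta=\omega+1$ satisfies the hypothesis). Theorem~\ref{Thm:Pi3Bound} is therefore unavailable, and nothing guarantees that the halting time or the strong looping time of an $(\mathbf{L}_\beta\cap\mathbb{R})$-$\SITBM$ computation lies below $\beta$, let alone that the corresponding initial segment of the computation is an element of $\mathbf{L}_\beta$. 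Your enumeration through $\mathbf{L}_\beta$ may thus run forever without meeting a witness even when the restricted computation does halt or does strongly loop.

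The paper sidesteps this entirely by taking precisely the route you set aside: a real-time simulation via the universal machine $U$, with strong-loop detection carried out on the fly. The code $b$ for $\mathbf{L}_\beta$ is used only to convert each snapshot (a tuple of reals in $\mathbf{L}_\beta$ together with a line number) into a single natural number $n$; a dedicated register $R$ then acts as a bit vector flagging the snapshot-codes seen since the last time the current snapshot dropped strictly below some stored one (at which point $R$ is erased and reinitialised with the current code). A strong loop is declared the moment the current snapshot-code is already flagged in $R$. Because the outer machine $P$ is an \emph{unrestricted} $\SITBM$, it is free to run well beyond $\beta$; no bound on the halting or looping time of the inner computation is needed, and the only role of $\mathbf{L}_\beta$ is that it contains every individual snapshot so that the snapshot-to-integer translation makes sense. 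Your strategy would become viable under an additional closure assumption on $\beta$, but the lemma as stated requires the simulation route.
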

\begin{proof}
Let $U$ be the universal $\SITBM$ machine of Lemma \ref{Lem:Universal}. We will describe the program $P$ and leave the precise implementation to the reader.  We start by computing a real number $b$ coding $\mathbf{L}_{\beta}$.

Given a code $c$ for a $\BSS$-program $P'$ as input $P$ starts executing $U$ on $c$. The program $P$ will use the register $R$ to keep track of the snapshots that appeared in the computation. At each step of the computation $P$ first checks if the simulation halted, in which case halts with output $0$. If the current snapshot $C$ of the simulation of $P'$ is not an halting snapshot, $P$ will use $b$ to find a natural number $n$ that codes $C$ in the sense of $b$.

Once $P$ finds the code for the snapshot it will check if the $n$th bit of binary sequence represented by the real in register $R$ is one, in which case the $P$ halts with output $1$. If not then for every $i\in \mathbb{N}$ the program $P$ checks if the $i$th bit of the binary sequence represented by the register register $R$ is $1$ in this case computes the snapshot coded by $i$ and checks that every element of the snapshot is smaller than or equal to  the corresponding element in the snapshot coded by $n$. If this is the case 
then $P$ sets the $(2n+1)$st bit of $R$ to $1$ and continues. If not then machine erases $R$, set the $(2n+1)$st bit of $R$ to $1$ and continues.

One can check that the $P$ does compute the halting problem restricted to $(\mathbf{L}_{\beta}\cap\mathbb{R})$-$\ITBM$s.
\end{proof}

As usual a subset $A$ of real numbers is called \emph{$\SITBM$-semi decidable} if there is a $\BSS$-program $P$ such that for every $r\in A$ the $\SITBM$-computation of $P$ with input $r$ halts, and for every $r\notin A$ the $\SITBM$-computation of $P$ with input $r$ diverges. Moreover, if $P$ can be chosen in such a way that for every $r\in A$ the $\mathbb{Q}$-$\SITBM$-computation of $P$ with input $r$ halts, and for every $r\notin A$ the $\mathbb{Q}$-$\SITBM$-computation of $P$ with input $r$ diverges. Then we will say that $A$ is \emph{rationally $\SITBM$-semi decidable}.

\begin{corollary}
Every rationally $\SITBM$-semi decidable set is $\SITBM$-computable. 
\end{corollary}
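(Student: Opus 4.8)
The plan is to convert the given rational semi-decision procedure into a \emph{strong loop}-detecting decision procedure, exploiting two facts: a $\mathbb{Q}$-$\SITBM$-computation never leaves $\mathbb{Q}$, and divergent $\SITBM$-computations always eventually strongly loop. Let $A$ be witnessed rationally $\SITBM$-semi decidable by a $\BSS$-program $P$, so that the $\mathbb{Q}$-$\SITBM$-computation of $P$ on $r$ halts precisely when $r\in A$. First I would pass from $P$ to a program $P'$ that runs $P$ as a \emph{full} $\SITBM$ but, after each simulated step and in particular at each limit step, tests whether all register contents are still rational and whether $P$'s limit rule actually produced a value; if either test fails, $P'$ abandons the simulation and moves to a fixed rational constant configuration on which it loops forever, rather than letting a register become irrational or letting the computation crash. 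The rationality test is unproblematic: by Remark \ref{Rem:ITRM} an $\SITBM$ is at least as strong as an $\ITRM$, so it can read off the binary expansion of a register and decide the arithmetical property of being eventually periodic, i.e. rational. By construction $P'$ never crashes, and its full-$\SITBM$-computation halts if and only if the $\mathbb{Q}$-$\SITBM$-computation of $P$ halts, i.e. if and only if $r\in A$; an irrational input $r$ is caught at once, forcing $r\notin A$ and sending $P'$ into its constant loop.

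Next I would observe that every divergent computation of $P'$ is a strong loop. Since $P'$ never crashes, a divergent computation of $P'$ proceeds through all ordinals. For rational $r$ every configuration of $P'$ is rational, hence lies in $\mathbf{L}_\beta\cap\Rea$ for any $\Pi_3$-reflecting $\beta$ (e.g. $\beta=\omega_1$), so such a computation is in particular an $(\mathbf{L}_\beta\cap\Rea)$-$\SITBM$-computation, and Theorem \ref{Thm:Pi3Bound} then forces it to be a strong loop; for irrational $r$ the computation is a constant, hence trivially strongly looping, tail. (This is exactly the content of the looping-time lemma stating that every divergent $\SITBM$-computation has a strong looping time.) Conversely, by the Strong Loop Lemma (Lemma \ref{Lem:strongLoop}) any strong loop that is detected genuinely witnesses divergence, so the detection produces no false positives.

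Finally I would run the decision procedure $P^{*}$: simulate $P'$ while, exactly as in the proof of Lemma \ref{Lem:CompSnapLemma}, recording the configurations seen so far together with the component-wise majorization bookkeeping that spots a strong loop. Here the snapshot-coding step of Lemma \ref{Lem:CompSnapLemma} simplifies markedly: every configuration of $P'$ consists of finitely many rationals and a program line, so it is coded directly by a natural number and no $\SITBM$-computable code for an initial segment $\mathbf{L}_\beta$ is needed. The machine $P^{*}$ outputs $1$ as soon as $P'$ halts and $0$ as soon as a strong loop is detected; by the previous paragraph exactly one of these occurs, and it occurs at a countable stage, so $P^{*}$ is total and decides $A$. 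Hence $A$ is $\SITBM$-computable.

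The main obstacle I expect is the faithful simulation at limit stages: I must ensure that $P'$ reproduces the limit values of $P$ correctly and recognises, \emph{before} committing to them, when $P$'s limit rule would yield an irrational value or no value at all, so that these divergences are turned into detectable constant loops rather than crashing $P'$ itself. Aligning the limit stages of the simulation with those of $P$ (using Remark \ref{Rem2} together with the universal machine of Lemma \ref{Lem:Universal}) and interleaving the rationality/limit-existence tests so that they complete in time is the delicate bookkeeping; everything else reduces to the routine loop-detection already carried out in Lemma \ref{Lem:CompSnapLemma}.
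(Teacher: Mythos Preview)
Your proposal is correct and follows essentially the same approach as the paper. The paper states the result as an immediate corollary of Lemma~\ref{Lem:CompSnapLemma}: pick any $\beta$ with $\mathbb{Q}\subseteq\mathbf{L}_\beta$ and an $\SITBM$-computable code for $\mathbf{L}_\beta$ (e.g.\ $\beta=\omega$ already suffices), check whether the input $r$ is rational, and if so feed the program-with-input into the halting decider of that lemma. Your argument simply unpacks and specialises that lemma to the rational case---in particular you observe, quite rightly, that when all snapshots are rational the snapshot-coding via a code for $\mathbf{L}_\beta$ collapses to a direct G\"odel numbering of finite tuples of rationals, so no constructibility machinery is needed. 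The ``main obstacle'' you flag (turning crashes and irrational limits of $P$ into detectable loops rather than crashes of $P'$) is exactly the caveat the paper waves away in the parenthetical remark at the end of Lemma~\ref{Lem:CompSnapLemma}; your suggested fix via $\SSITBM$-style bookkeeping or the bounded-into-$(0,1)$ trick of Lemma~\ref{LEM:EqSITBM-BSITBM} is the natural way to make this honest.
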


\begin{corollary}
There is a $\SITBM$-semi decidable set which is not rationally $\SITBM$-semi decidable. 
\end{corollary}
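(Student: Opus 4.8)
The plan is to exhibit the (self-)halting problem for $\SITBM$s as the desired witness. Concretely, I would work with the set
$$K=\{c\in\mathbb{R}\;:\;c \text{ codes a }\BSS\text{-program } P_c \text{ and the }\SITBM\text{-computation of } P_c \text{ on input } c \text{ halts}\},$$
and first observe that $K$ is $\SITBM$-semi decidable. This is immediate from the universal machine $U$ of Lemma \ref{Lem:Universal}: on input $c$, simulate the computation of $P_c$ on $c$ by running $U$, halt exactly when this simulation halts, and otherwise (in particular when $c$ does not code a program) keep running forever. By construction the resulting program halts precisely on the elements of $K$ and diverges elsewhere, so $K$ is $\SITBM$-semi decidable.

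Next I would show that $K$ is not $\SITBM$-computable by the standard diagonal argument, which goes through precisely because $\SITBM$s admit a universal machine and can manipulate program codes. Suppose towards a contradiction that some $\BSS$-program decides $K$. Then one can build a program $E$ which on input $c$ first decides whether $c\in K$; if $c\in K$ the program $E$ enters an infinite loop and diverges, while if $c\notin K$ the program $E$ halts. Let $e$ be a code for $E$, so that $P_e=E$. Tracing the definitions, the $\SITBM$-computation of $E$ on input $e$ halts iff $e\notin K$, while by the definition of $K$ we have $e\in K$ iff $E$ on input $e$ halts. Combining these yields $e\in K\Leftrightarrow e\notin K$, a contradiction. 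Hence $K$ is not $\SITBM$-computable.

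Finally I would appeal to the preceding corollary, which asserts that every rationally $\SITBM$-semi decidable set is $\SITBM$-computable. Its contrapositive states that a set which is not $\SITBM$-computable cannot be rationally $\SITBM$-semi decidable. Since $K$ is $\SITBM$-semi decidable but, by the diagonal argument, not $\SITBM$-computable, it follows that $K$ is not rationally $\SITBM$-semi decidable, which is exactly the separation required.

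The one point needing care — and the only genuine obstacle — is the recursion-theoretic bookkeeping in the diagonal step: I must ensure that a code $e$ for $E$ can be produced effectively (so that $E$ can be fed its own code) and that feeding a real code as input to $U$ causes no register to diverge spuriously. Both are routine given Lemma \ref{Lem:Universal} together with the fact, used repeatedly above, that $\SITBM$s can read off and locally edit the bits of a register's binary expansion; none of this interacts with the delicate limit behaviour of $\SITBM$s, since the divergence in the construction is produced by an explicit infinite loop rather than by a failing $\liminf$.
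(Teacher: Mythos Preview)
Your proposal is correct and follows essentially the intended route: the paper leaves this corollary without proof, but its placement immediately after ``every rationally $\SITBM$-semi decidable set is $\SITBM$-computable'' makes clear that the intended argument is precisely to exhibit an $\SITBM$-semi decidable but $\SITBM$-undecidable set and invoke the contrapositive, which is exactly what you do with the self-halting set $K$ and the standard diagonalisation via the universal machine. The bookkeeping you flag (obtaining a code $e$ for $E$, and ensuring the simulation does not crash for extraneous reasons) is indeed routine here, since $\BSS$-programs have finite descriptions with rational parameters and hence can be coded by natural numbers, so $e$ is unproblematic and your explicit loop suffices for the required divergence.
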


\bibliographystyle{splncs}
\bibliography{REFERENCES}
\newpage

\end{document}